\theoremstyle{plain}
\newtheorem{theorem}{Theorem}[section]
\newtheorem{proposition}[theorem]{Proposition}
\newtheorem{lemma}[theorem]{Lemma}
\numberwithin{theorem}{section}
\numberwithin{equation}{section}
\newcommand{\average}{{\mathchoice {\kern1ex\vcenter{\hrule height.4pt
width 6pt depth0pt} \kern-9.7pt} {\kern1ex\vcenter{\hrule
height.4pt width 4.3pt depth0pt} \kern-7pt} {} {} }}
\def\R{\mathbb{R}}
\renewcommand{\a }{\alpha }
\renewcommand{\b }{\beta }
\renewcommand{\d}{\delta }
\newcommand{\D }{\Delta }
\newcommand{\tr }{\hbox{ tr } }
\newcommand{\e }{\varepsilon }
\newcommand{\g }{\gamma}
\newcommand{\G }{\Gamma}
\newcommand{\n }{\nabla }
\renewcommand{\phi}{\varphi}
\newcommand{\s }{\sigma }
\renewcommand{\t }{\tau }
\renewcommand{\th }{\theta }
\renewcommand{\O }{\Omega }
\newcommand{\ov}{\overline}
\newcommand{\be}{\begin{equation}}
\newcommand{\ee}{\end{equation}}
\newcommand{\de}{\partial}
\newcommand{\ti}{\widetilde}
\renewcommand{\k}{\kappa}
\newcommand{\calO }{\mathcal{O}}
\newcommand{\calC }{\mathcal{C}}
\newcommand{\calD }{\mathcal{D}}
\newcommand{\N}{\mathbb{N}}
\newcommand{\cD}{{\mathcal D}}
\newcommand{\cR}{{\mathcal R}}
\newcommand{\B}{{Q}}
\renewcommand{\epsilon}{\varepsilon}
\begin{document}
 
\title[Influence of an $L^p$-perturbation on Hardy-Sobolev inequality with singularity a curve]
{Influence of an $L^p$-perturbation on Hardy-Sobolev inequality with singularity a curve}
\author{IDOWU ESTHER IJAODORO}
\address{I. E. I.: African Institute for Mathematical Sciences in Senegal, KM 2, Route de
Joal, B.P. 14 18. Mbour, Senegal.}
\email{idowu.e.ijaodoro@aims-senegal.org}
\author{El Hadji Abdoulaye THIAM}
\address{H. E. A. T. : Université de Thies, UFR des Sciences et Techniques, département de mathématiques, Thies.}
\email{elhadjiabdoulaye.thiam@univ-thies.sn}
\begin{abstract}
We consider a  bounded domain $\Omega$ of $\mathbb{R}^N$, $N\ge3$, $h$ and $b$ continuous functions on $\Omega$. Let $\Gamma$ be  a closed curve contained in $\Omega$. We study existence of positive solutions $u \in H^1_0\left(\Omega\right)$ to the perturbed Hardy-Sobolev equation:
$$
-\Delta u+h u+bu^{1+\delta}=\rho^{-\sigma}_\Gamma u^{2^*_\sigma-1} \qquad \textrm{ in } \Omega,
$$
where $2^*_\sigma:=\frac{2(N-\sigma)}{N-2}$ is the critical Hardy-Sobolev exponent, $\sigma\in [0,2)$, $0< \delta<\frac{4}{N-2}$ and $\rho_\Gamma$ is the distance function to $\Gamma$. We show that the existence of minimizers does not depend on the local geometry of $\Gamma$ nor on the potential $h$. For $N=3$, the existence of ground-state solution may depends on the trace of the regular part of the  Green function of $-\Delta+h$ and or on $b$. This is due to the perturbative term  of order ${1+\d}$.
\end{abstract}
\maketitle
\bigskip
\noindent
\textit{AMS Mathematics Subject Classification:} 35J91, 35J20, 35J75.\\\
\noindent\\\
\textit{Key words}: Hardy-Sobolev inequality; Positive minimizers; Parametrized curve; Mass; Green function.

\section{Introduction}\label{Intro}
Hardy-Sobolev inequality with a cylindrical weight states, for  $N\geq 3$, $0\leq k\leq N-1$ and $\s\in[0,2)$, that
\begin{equation}\label{Hardy-Sobolev}
\int_{\R^N} |\nabla v|^2 dx \geq C  \biggl(\int_{\R^N} |z|^{-\s} |v|^{2^*_\s} dx\biggl)^{2/2^*_\s}\qquad \textrm{ for all $v \in \mathcal{D}^{1,2}({\R^N})$,}
\end{equation}
where $x=(t,z)\in \R^k \times \R^{N-k}$,  $C= C(N,\s,k)>0$, $  2^*_\s:= \frac{2(N-\s)}{N-2}$ is the critical Hardy-Sobolev exponent and   $\calD^{1,2}(\R^N)$ is the completion of $C^\infty_c(\R^N)$ with respect to the norm 
$$
v\longmapsto\left(\int_{\R^N} |\n v|^2 dx\right)^{1/2}.
$$
Inequality \eqref{Hardy-Sobolev} can be obtained by interpolating between Hardy (which corresponds to the case $\s=2$ and $k\not=N-2$) and Sobolev (which is  the case $\s=0$) inequalities. This inequality is invariant by scaling  on $\R^N$ and by translations in the $t$-direction.\\

When $\s=2$ and $k \not=N-2$, the best constant is $\left(\frac{N-k-2}{2}\right)^2$ but it is never achieved. For $\s\in [0, 2)$, the best constant $C$ in \eqref{Hardy-Sobolev} is given by  
\be\label{eq:Hard-Sob-sharp}
S_{N,\s}:=\min\left\{ \frac{1}{2}\int_{\R^N} |\nabla v|^2 dx-\frac{1}{2^*_\s}\int_{\R^N} |z|^{-\s} |v|^{2^*_\s} dx, \,\, v\in \cD^{1,2}(\R^N)   \right\}.
%
\ee
In the case $\s\in [0,2)$ and $k=0$, $S_{N,\s}$ is achieved by the standard bubble $c_{N,\s} (1+|x|^{2-\s})^{\frac{2-N}{2-\s}}$, see for instance Aubin \cite{Talenti}, Talenti \cite{Aubin} and Lieb  \cite{Lieb}.  When $k=N-1$, the support of the minimizer is contained in a half space, see Musina \cite{Musina}. 
For $1\leq k\leq N-2$ and $\s\in (0,2)$, Badiale and Tarentello \cite{BT} proved the existence of a minimizer $w$ for \eqref{eq:Hard-Sob-sharp}. They were motivated by questions from astrophysics. Later Mancini, Fabbri and Sandeep used the moving plane method to prove that $w(t,z)=\th(|t|, |z|) $, for some positive function $\th$.    An interesting classification result was also derived in \cite{FMS} when $\s=1$,  that every minimizer is of the form $ c_{N,k} ((1+|z|)^2+|t|^2)^{\frac{2-N}{2}}$, up to scaling in $\R^N$ and translations in the $t$-direction.\\
Since in this paper we are interested with Hardy-Sobolev inequality with  weight singular at a given curve, our asymptotic energy level is given by $S_{N,\s}$  with     $k=1$ and $\s\in [0,2)$.\\

Let $\O$ be a bounded domain in $\R^N$, $N\geq 3$, $h$ and $b$ continuous function on $\O$. Let  $\G \subset \O$ be a smooth closed curve. In this paper, we are concerned with the existence of minimizers for the infinimum
\be \label{eq:min-to-study}
\mu_\s(\O, \G, h, b):=\inf_{ u\in  H^1_0(\O) \setminus\{0\} } \displaystyle \frac{1}{2}\int_{\O} |\nabla u|^2 dx +\frac{1}{2}\int_\O h u^2 dx+\frac{1}{2+\d}\int_{\O} b u^{2+\d} dx -{\frac{1}{2^*_\s}}\int_{\O} \rho_\G^{-\s} |u|^{2^*_\s} dx,
\ee
where $\s\in[0,2)$,  $\displaystyle 2^*_\s:= \frac{2(N-\s)}{N-2}$, $0< \delta < \frac{4}{N-2}$ and $ \rho_\G(x):=\textrm{dist}(x,\G)$ is the distance function to $\Gamma$.  Here and in the following, we assume that $-\D+h$ defines a  coercive bilinear form on $H^1_0(\O)$ and that $b\leq 0$. We are interested with the effect of $b$ and/or the location of the curve $\G$ on  the existence of minimmizer for $\mu_\s(\O, \G, h, b)$.\\
When there is no perturbation, and $\s=0$, problem \eqref{eq:min-to-study} reduces to the famous Brezis-Nirenberg problem \cite{BN}. In this case,  for $N\geq 4$ it is enough that $h(y_0)<0$ to get a minimizer, whereas for $N=3$, the existence of minimizers is guaranteed by the positiveness of  a certain mass, see Druet  \cite{Druet}.\\
Here, we deal with the case $\s\in [0,2)$.Our first result deals with the case $N\geq 4$. Then we have
\begin{theorem}\label{th:main1}
Let $N \geq 4$, $\s\in [0,2)$ and   $\O$  be a   bounded domain of $\R^N$. Consider  $\G$ a smooth closed curve contained in $\O$.
Let  $h$ and $b$ be  continuous function such that the linear operator $-\D+h$ is coercive and $b \leq 0$.  We assume that
\begin{equation}\label{eq:h-bound-main-th-11}
b(y_0) < 0,
\end{equation}
for some $y_0 \in \G$. Then $\mu(\O, \G, h, b)$ is achieved by a positive function $u \in H^1_0(\O)$.
\end{theorem}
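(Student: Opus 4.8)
The plan is to reduce the existence of a minimizer to the strict sub-threshold inequality $\mu_\s(\O,\G,h,b)<S_{N,\s}$, where $S_{N,\s}$ denotes the ground-state energy (for $k=1$) of the limiting cylindrical Hardy--Sobolev problem, i.e. the energy of one standard bubble. Since the energy functional
\[
J(u)=\tfrac12\int_\O|\n u|^2+\tfrac12\int_\O h\,u^2+\tfrac1{2+\d}\int_\O b\,|u|^{2+\d}-\tfrac1{2^*_\s}\int_\O \rho_\G^{-\s}|u|^{2^*_\s}
\]
is unbounded below on $H^1_0(\O)$, I read $\mu_\s$ as its ground-state level (the infimum of $J$ over the associated Nehari manifold, equivalently its mountain-pass value), which is finite and positive thanks to the coercivity of $-\D+h$ and to $b\le0$. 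The first step is a concentration-compactness analysis of a minimizing Palais--Smale sequence $(u_n)$: coercivity yields boundedness in $H^1_0(\O)$, and the sole obstruction to strong convergence is the concentration of a single bubble at a point where $\rho_\G^{-\s}$ restores criticality, i.e. on $\G$ (for $\s>0$ the nonlinearity is subcritical away from $\G$). Such a bubble carries energy at least $S_{N,\s}$, so the strict inequality $\mu_\s<S_{N,\s}$ rules out this scenario and produces a nonnegative minimizer.

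The heart of the matter is therefore to construct test functions realizing $\mu_\s<S_{N,\s}$. I would fix $y_0\in\G$ with $b(y_0)<0$, introduce Fermi coordinates along $\G$ in a tubular neighbourhood of $y_0$ (so that $\rho_\G$ becomes, to leading order, the distance $|z|$ to the tangent line and the weight matches the cylindrical model with $k=1$), and transplant the extremal $w$ of $S_{N,\s}$ recalled above. Rescaling $w_\e(x)=\e^{-\frac{N-2}{2}}w(x/\e)$ and multiplying by a cut-off supported near $y_0$ gives a family $u_\e\in H^1_0(\O)$; one then evaluates $\max_{t>0}J(t\,u_\e)$ and expands it in $\e$.

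The key point is the ordering of the correction terms. The Dirichlet and Hardy--Sobolev terms reproduce $S_{N,\s}$ up to errors from the cut-off and the curvature of $\G$, which are $O(\e^2)$ (or $O(\e^2|\log\e|)$ when $N=4$); likewise $\int_\O h\,u_\e^2=O(\e^2)$ for $N\ge5$ and $O(\e^2|\log\e|)$ for $N=4$. The decisive contribution is the perturbation. Since $u_\e^{2+\d}\approx\e^{-\frac{(N-2)(2+\d)}2}w(\cdot/\e)^{2+\d}$ and, for $N\ge4$, one has $\int_{\R^N}w^{2+\d}<\infty$ (indeed $(N-2)(2+\d)>N$), a change of variables gives
\[
\tfrac1{2+\d}\int_\O b\,u_\e^{2+\d}=b(y_0)\,C_0\,\e^{\,2-\frac{(N-2)\d}2}+o\!\left(\e^{\,2-\frac{(N-2)\d}2}\right),\qquad C_0>0 .
\]
The bound $0<\d<\tfrac4{N-2}$ forces $0<2-\tfrac{(N-2)\d}2<2$, so this term is negative (because $b(y_0)<0$) and of strictly lower order in $\e$ than every geometric and potential correction above. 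Consequently, for $\e$ small,
\[
\max_{t>0}J(t\,u_\e)=S_{N,\s}+b(y_0)\,C_0\,\e^{\,2-\frac{(N-2)\d}2}+o\!\left(\e^{\,2-\frac{(N-2)\d}2}\right)<S_{N,\s}.
\]
This is exactly where $N\ge4$ enters and explains why neither $h$ nor the local geometry of $\G$ affects existence: the subcritical perturbation always supplies the dominant, negative correction.

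Finally, the minimizer obtained in the first step is nonnegative (replacing $u$ by $|u|$ leaves $J$ unchanged), it is a critical point of $J$ and hence a weak solution of the Euler--Lagrange equation; elliptic regularity together with the strong maximum principle then give $u>0$. I expect the main obstacle to be the precise expansion in the third step --- in particular, controlling through the Fermi coordinates the curvature-induced error in $\int_\O\rho_\G^{-\s}u_\e^{2^*_\s}$ and verifying that every geometric and potential correction is genuinely $O(\e^2)$ (or $O(\e^2|\log\e|)$), so that the negative $\e^{\,2-\frac{(N-2)\d}2}$ term coming from $b$ is indeed the leading-order perturbation.
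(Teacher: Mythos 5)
Your proposal is correct and is essentially the paper's own proof: the paper likewise reduces Theorem \ref{th:main1} to the sub-threshold inequality $\mu_\s(\O,\G,h,b)<S_{N,\s}$ (invoked as standard, with references to \cite{Fall-Thiam, ThiamH, Jaber4}), and establishes it by expanding the functional at a cut-off, rescaled extremal transplanted via the Fermi-type coordinates $F_{y_0}$, where the Dirichlet, Hardy--Sobolev and potential corrections are $O(\e^2)$ (resp.\ $O(\e^2|\log\e|)$ for $N=4$, Lemma \ref{Lem1}) while the perturbation supplies the dominant negative term $b(y_0)\,\e^{2-\frac{(N-2)\d}{2}}\int_{\R^N}w^{2+\d}\,dx$ (Lemma \ref{Lem2}, Proposition \ref{Prop1}). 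The only cosmetic difference is that you maximize $J$ along the ray $t\mapsto t u_\e$, whereas the paper evaluates $J(u_\e)$ directly.
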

In contrast, to the result of the second author and Fall \cite{Fall-Thiam}, inequality \eqref{eq:h-bound-main-th-11} in Theorem \ref{th:main1} shows that there is no influence of the curvature of $\G$ nor the potential $h$. This is due to the influence of the added perturbation  term in \eqref{eq:min-to-study}.  \\
For $N=3$, we let   $G(x,y)$ be  the Dirichlet Green function of the operator $-\D +h$, with zero Dirichlet data. It satisfies 
\be\label{eq:Green-expan-introduction}
\begin{cases}
-\D_x G(x,y)+h(x) G(x,y)=0&  \qquad\textrm{  for every $x\in \O\setminus\{y\}$}\\\\
G(x,y)=0 &  \qquad\textrm{  for every $x\in\de  \O$.}
\end{cases}
\ee
In addition, there exists a continuous function   $\textbf{m}:\O\to \R$ and a positive constant $c>0$ such that  
\be \label{eq:expans-Green}
 G (x,y)=\frac{c}{  |x- y|}+ c\, \textbf{m}(y)+o(1)  \qquad \textrm{ as $x \to y.$} 
\ee
This  function   $\textbf{m}:\O\to \R$ is  the    \textit{mass} of $-\D+h$ in $\O$. Our second main result is the following
\begin{theorem}\label{th:main2}
Let $\s\in [0,2)$ and   $\O$  be a   bounded domain of $\R^3$. Consider  $\G$ a smooth closed curve contained in $\O$.
Let  $h$ and $b$ be continuous functions such that the linear operator $-\D+h$ is coercive and $b \leq 0$. We assume that
\begin{equation}\label{eq:h-bound-main-th-1}
\begin{cases}
b(y_0) < 0 &\qquad \textrm{ for \quad $2<\delta <4$}\\\\
m(y_0)>c b(y_0) &\qquad \textrm{ for \quad $\d=2$}\\\\
m(y_0)>0  & \qquad \textrm{ for \quad $0<\d<2$},
\end{cases}
\end{equation}
for some positive constant $c$ and $y_0\in \G$. Then $\mu_\s\left(\O,\G, h, b\right)$ is achieved by a positive function $u \in H^1_0(\O)$.  
\end{theorem}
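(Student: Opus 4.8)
The plan is to follow the classical sub-threshold (Aubin--Brezis--Nirenberg) scheme, adapted to the curve singularity. Write $J$ for the functional in \eqref{eq:min-to-study}. Since $-\D+h$ is coercive and $b\le 0$, for each $u\neq 0$ the fibering map $t\mapsto J(tu)$ attains a unique positive maximum, so that $\mu_\s(\O,\G,h,b)$ is realized as the associated mountain-pass level, a finite positive number. Let $\bar\mu_\s$ be the corresponding level of the limiting problem on $\R^N$ with flat weight $|z|^{-\s}$, $k=1$ and $h\equiv b\equiv 0$, whose extremal is the Badiale--Tarantello minimizer $w$ for $S_{N,\s}$ \cite{BT}. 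First I would record that $\mu_\s\le\bar\mu_\s$ always, and that the strict inequality $\mu_\s<\bar\mu_\s$ restores compactness of minimizing sequences: by concentration--compactness the sole obstruction is the loss of a bubble carrying energy $\bar\mu_\s$, which the strict inequality excludes. A minimizer $u$ then exists; replacing $u$ by $|u|$ and invoking the maximum principle together with elliptic regularity makes it positive. Everything thus reduces to proving $\mu_\s<\bar\mu_\s$ under \eqref{eq:h-bound-main-th-1}.

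Second, I would construct a concentrating test family. Fix $y_0\in\G$ as in \eqref{eq:h-bound-main-th-1} and work in Fermi coordinates $(t,z)$ along $\G$ near $y_0$ (one tangential variable $t$, two normal variables $z$, so that $\rho_\G=|z|+O(|z|^2)$). Set $w_\eps(x)=\eps^{-1/2}w(x/\eps)$, localize it by a cut-off supported near $y_0$, and---this being the genuinely three-dimensional feature---correct it by a multiple of the regular part of the Green function $G(\cdot,y_0)$ of $-\D+h$, so that the mass $m(y_0)$ from \eqref{eq:expans-Green} enters the expansion. Call the resulting function $u_\eps\in H^1_0(\O)$.

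Third, I would expand $J$ along this family as $\eps\to 0$. The quadratic-plus-critical part produces, exactly as in Druet's mass analysis \cite{Druet} transplanted to the curve, the expansion $\bar\mu_\s-c_1\,m(y_0)\,\eps+o(\eps)$ with $c_1>0$, the curvature of $\G$ and the residual effect of $h$ contributing only at higher order. The perturbation contributes $\frac{b(y_0)}{2+\d}\int w_\eps^{2+\d}\,(1+o(1))$; since $w\sim|x|^{-1}$ at infinity, the scaling gives
\[
\int w_\eps^{2+\d}\asymp
\begin{cases}
\eps^{\,2-\d/2}, & 1<\d<4,\\
\eps^{3/2}\,|\log\eps|, & \d=1,\\
\eps^{\,1+\d/2}, & 0<\d<1,
\end{cases}
\]
the cut-off at scale $\eps^{-1}$ being responsible for the two lower lines. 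Comparing the order $\eps$ of the mass term with that of the perturbation then produces the three regimes of \eqref{eq:h-bound-main-th-1}: for $0<\d<2$ the perturbation is $o(\eps)$ and the sign is governed by $-c_1 m(y_0)\eps$, so $m(y_0)>0$ yields $\mu_\s<\bar\mu_\s$; for $\d=2$ both terms are of order $\eps$ with combined coefficient $-c_1 m(y_0)+c_2 b(y_0)$ for some $c_2>0$, whence the sharp condition $m(y_0)>c\,b(y_0)$ with $c=c_2/c_1$; for $2<\d<4$ the perturbation of order $\eps^{\,2-\d/2}\ll\eps$ dominates and, since $b(y_0)<0$, drives $\mu_\s$ strictly below $\bar\mu_\s$.

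Finally, the main obstacle is the third step in dimension three. Extracting the mass term with the correct positive constant $c_1$ and sign requires the Green-function correction and a careful treatment of the slowly decaying tail $w\sim|x|^{-1}$, precisely the phenomenon that makes $N=3$ critical; simultaneously one must follow the borderline integrability of $\int w^{2+\d}$ (convergent only for $\d>1$) through the cut-off, and verify that the geometry of $\G$ and the potential $h$ enter only beyond the orders above---this is what underlies the assertion that, unlike in \cite{Fall-Thiam}, neither the curvature of $\G$ nor $h$ (beyond its mass) affects existence. Once these expansions are pinned down with the stated orders, the comparison is immediate and the theorem follows from the compactness established in the first step.
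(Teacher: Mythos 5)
Your proposal is correct and follows essentially the same route as the paper: the same sub-threshold criterion ($\mu_\s<S_{3,\s}$ restores compactness, yielding a positive minimizer), the same test function (a concentrating bubble in Fermi coordinates along $\G$ corrected by $\eps^{1/2}$ times the regular part of the Green function of $-\D+h$), and the same three-regime comparison of the order-$\eps$ mass term against the perturbation term, with your scaling table for $\int u_\eps^{2+\d}$ agreeing with Lemma \ref{Lem2} and Proposition \ref{Expansion-no-h}. The only technical difference is that, since the ground state $w$ is not explicit for $N=3$ and $\s\in(0,2)$, the paper can only produce a discrete family of test functions $\Psi_{\eps_n}$ via the sequential convergence $v_{\eps_n}\to \mathbf{c}/|x|$ of Lemma \ref{lem:v-to-cR}, rather than the continuous family $\eps\to 0$ you assume; this does not alter the structure of the argument.
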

%
%
%
%
%
%

The literature about Hardy-Sobolev  inequalities on domains  with various singularities is very hudge. The existence of minimizers depends on the curvatures at a point of the singularity. For more details, we refer to Ghoussoub-Kang \cite{GK},  Ghoussoub-Robert  \cite{GR3, GR5}, Demyanov-Nazarov \cite{DN}, Chern-Lin \cite{CL}, Lin-Li \cite{LL}, Fall-Thiam \cite{Fall-Thiam},  Fall-Minlend-Thiam in \cite{FMT} and the references there in. We refer also  to  Jaber \cite{Jaber1, Jaber4} and Thiam \cite{Thiam1, Thiam, ThiamH} and references therein, for  Hardy-Sobolev inequalities on Riemannian manifold. Here also the impact of the scalar curvature at the point singularity plays an important role for the existence of minimizers in higher dimensions $N\geq 4$. The paper  \cite{Jaber1} contains also existence result under positive mass condition for $N=3$. \\

The proof of Theorem \ref{th:main1} and Theorem \ref{th:main2} rely on test function methods. Namely to build appropriate test functions allowing to compare $\mu_\s(\O, \G, h, b)$ and $ S_{N,\s}$. We find a  continuous family  of test functions $(u_\e)_{\e>0}$ concentrating at a point $y_0\in \G$ which yields $\mu(\O, \G, h, b)<S_{N,\s}  $, as $\e\to 0$, provided \eqref{eq:h-bound-main-th-1} holds.   In Section \ref{s:3D-case}, we consider   the case $N=3$.  Due to the fact that   the ground-state $w$ for $S_{3,\s}$, $\s\in (0,2)$ is not known explicitly,   it is not radially symmetric, it is not smooth and $S_{3,\s}$ is only invariant under translations in the $t-$direction; we could only construct a discrete family of test functions $(\Psi_{\e_n})_{n\in \N}$ that leads to the inequality $\mu_\s(\O, \G, h, b)<S_{3,\s}$.  These are
similar to the  test  functions $(u_{\e_n})_{n\in \N}$ in dimension $N\geq 4$ near the concentration point $y_0$, but away from it is  substituted  with the regular part of the  Green function $G(x,y_0)$, which makes appear the mass $\textbf{m}(y_0)$ and/or $b(y_0)$ in its first order Taylor expansion, see \eqref{eq:expans-Green}.  \\

The paper is organized as follows: In Section \ref{Section1}, we recall some geometric and analytic preliminaries results relating to the local geometry of the curve $\G$ and the decay estimates of the ground state $w$ of $S_{N, \s}$. In Section \ref{s:Exits-N-geq4} and Section \ref{s:3D-case}, we construct a test function for $\mu_\s(\O, \G, h, b)$ in order to prove Theorem \ref{th:main1} and Theorem \ref{th:main2}. Their proof is completed in Section \ref{Section5}.

\bigskip
\noindent

\section{Preliminaries Results}\label{s:Geometric-prem}\label{Section1}
Let    $\G\subset \R^N$ be  a smooth closed  curve. Let $(E_1;\dots; E_N)$ be an orthonormal basis of $\R^N$.
 For $y_0\in \G$ and $r>0$ small,  we consider the curve $\gamma:\left(-r, r\right) \to \G$,   parameterized by arclength such that $\gamma(0)=y_0$. Up to a translation and a rotation,  we may assume that $\g'(0)=E_1$.     We choose a smooth   orthonormal frame field $\left(E_2(t);...;E_N(t)\right)$ on the normal bundle of $\G$ such that $\left(\g'(t);E_2(t);...;E_N(t)\right) $ is an oriented basis of $\R^N$ for every $t\in (-r,r)$, with $E_i(0)=E_i$. \\
We fix the following notation, that will be used  a lot in the paper,
$$
 Q_r:=(-r,r)\times B_{\R^{N-1}}(0,r) ,
$$
where $B_{\R^{N-1}}(0,r)$ denotes the ball in $\R^{N-1}$ with radius $r$ centered at the origin.
 Provided $r>0$  small, the map $F_{y_0}: Q_r\to \O$, given by 
$$
 (t,z)\mapsto  F_{y_0}(t,z):= \gamma(t)+\sum_{i=2}^N z_i E_i(t),
$$
is smooth and parameterizes a neighborhood of $y_0=F_{y_0}(0,0)$. We consider $\rho_\G:\G\to \R$ the distance function to the curve given by 
$$
\rho_\G(y)=\min_{\ov y\in \R^N}|y-\ov y|.
$$
In the above coordinates, we have 
\begin{equation}\label{eq:rho_Gamm-is-mod-z}
\rho_\G\left(F_{y_0}(x)\right)=|z| \qquad\textrm{ for every $x=(t,z)\in Q_r.$} 
\end{equation}
Clearly, for every $t\in (-r,r)$ and $i=2,\dots N$, there  are real numbers $\k_i(t)$ and $\tau^j_i(t)$  such that
\begin{equation}\label{DerivE_i}
E_i^\prime(t)= \kappa_i(t) \gamma^\prime(t)+\sum_{ {j=2} }^N\tau_i^j(t) E_j(t).
\end{equation}
The quantity  $\kappa_i(t)$ is the curvature in the $E_i(t)$-direction while $\tau_i^j(t)$ is the torsion from the osculating plane spanned by $\{\g'(t); E_j(t)\}$ in the direction  $E_i $.  We note that provided $r>0$ small, $\k_i$ and $\t_i^j$ are smooth functions on $(-r,r)$. Moreover, it is easy to see that 
\be\label{eq:tau-antisymm}
\t^j_i(t)=-\t^i_j(t) \qquad\textrm{ for $i,j=2,\dots, N$.   } 
\ee
The curvature vector is $\k:\G\to \R^N$  is defined as  $\k(\g(t)):=\sum_{i=2}^N \k_i(t) E_i(t)$ and its norm is given by  $|\k\g(t)|:=\sqrt{\sum_{i=2}^N\k^2_i(t)}$.
Next, we derive the expansion of the metric induced by the parameterization $F_{y_0}$ defined above.
For $x=(t,z) \in Q_r$, we define 
$$
g_{11}(x)=   {\de_t F_{y_0}}(x) \cdot   {\de_t F_{y_0}} (x) , \qquad g_{1i}(x)=   {\de_t F_{y_0}} (x)\cdot   {\de_{z_i} F_{y_0}}(x) ,\qquad  g_{ij}(x)=   {\de_{z_j} F_{y_0}}(x) \cdot   {\de_{z_i} F_{y_0}}(x).
$$
We have the following result.
\begin{lemma}\label{MaMetric}
There exits $r>0$, only depending on $\G$ and $N$, such that  for ever $x=(t,z)\in Q_r$ 
\begin{equation}\label{Vert}
\begin{cases}
\displaystyle g_{11}(x)=1+2\sum_{i=2}^Nz_i \kappa_i(0)+2t\sum_{i=2}^Nz_i \kappa^\prime_i(0)+\sum_{ij=2}^N z_i z_j \kappa_i(0) \kappa_j(0)+\sum_{ij=2}^N z_i z_j  \b_{ij}(0)+O\left(|x|^3\right)\\
\displaystyle g_{1i}(x)=\sum_{j=2}^N z_j \tau^i_j(0)+t\sum_{j=2}^N z_j \left(\tau^i_j\right)^\prime(0)+O\left(|x|^3\right)\\
\displaystyle g_{ij}(x)=\d_{ij},
\end{cases}
\end{equation}
where  $\b_{ij}(t):=\sum_{l=2}^N \tau_i^l(t) \tau_j^l(t).$
\end{lemma}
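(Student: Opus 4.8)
The plan is to prove this by direct computation of the first fundamental form associated with the parameterization $F_{y_0}$, using the structure equations \eqref{DerivE_i} to differentiate the frame. First I would compute the coordinate vector fields. Since $\partial_{z_i} F_{y_0}(t,z) = E_i(t)$ and $\partial_t F_{y_0}(t,z) = \gamma'(t) + \sum_{i=2}^N z_i E_i'(t)$, substituting \eqref{DerivE_i} gives
$$
\partial_t F_{y_0}(t,z) = \Big(1 + \sum_{i=2}^N z_i \kappa_i(t)\Big)\gamma'(t) + \sum_{j=2}^N \Big(\sum_{i=2}^N z_i \tau_i^j(t)\Big) E_j(t).
$$
The third identity $g_{ij}(x) = \delta_{ij}$ is then immediate from $\partial_{z_i} F_{y_0} = E_i(t)$ together with the orthonormality of the frame $(E_2(t),\dots,E_N(t))$, and requires no expansion.

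For $g_{1i}$, I would take the inner product of the displayed expression for $\partial_t F_{y_0}$ with $E_i(t)$. Because $\gamma'(t)$ is orthogonal to the normal bundle and the frame is orthonormal, only the coefficient of $E_i(t)$ survives, yielding the \emph{exact} identity $g_{1i}(x) = \sum_{j=2}^N z_j \tau_j^i(t)$. Taylor expanding each $\tau_j^i(t) = \tau_j^i(0) + t(\tau_j^i)'(0) + O(t^2)$ and multiplying by $z_j$ then produces the claimed expansion, the remainder $z_j\, O(t^2)$ being absorbed into $O(|x|^3)$.

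For $g_{11}$, I would square $\partial_t F_{y_0}$. Orthogonality of $\gamma'(t)$ to the $E_j(t)$ and orthonormality of the frame kill all cross terms and leave
$$
g_{11}(x) = \Big(1 + \sum_{i=2}^N z_i \kappa_i(t)\Big)^2 + \sum_{i,j=2}^N z_i z_j \sum_{l=2}^N \tau_i^l(t)\tau_j^l(t),
$$
where the second double sum is exactly $\sum_{i,j=2}^N z_i z_j \beta_{ij}(t)$ by the definition of $\beta_{ij}$. Expanding the square into $1 + 2\sum_i z_i\kappa_i(t) + \sum_{i,j} z_i z_j \kappa_i(t)\kappa_j(t)$ and then Taylor expanding the coefficients in $t$ gives the first identity.

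The only delicate point, and the one requiring care rather than any new idea, is the bookkeeping of orders in this last step: the linear-in-$z$ term $2\sum_i z_i \kappa_i(t)$ must be expanded to \emph{second} order in $t$, producing the $\kappa_i(0)$ and $t\kappa_i'(0)$ contributions with remainder $O(|z|\,t^2) = O(|x|^3)$, whereas the quadratic-in-$z$ terms $\sum_{i,j} z_i z_j \kappa_i(t)\kappa_j(t)$ and $\sum_{i,j} z_i z_j \beta_{ij}(t)$ need only be frozen at $t=0$, since replacing $t$ by $0$ there costs only $O(|z|^2 t) = O(|x|^3)$. Choosing $r>0$ small enough that $F_{y_0}$ is a diffeomorphism onto its image and that $\kappa_i,\tau_i^j$ and their first derivatives are bounded on $(-r,r)$ makes all these remainder estimates uniform; this is precisely where the dependence of $r$ on $\Gamma$ and $N$ enters. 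There is no serious conceptual obstacle — it is a careful but essentially routine Frenet-type computation.
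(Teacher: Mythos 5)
Your proof is correct and takes essentially the same approach as the paper: compute $\partial_t F_{y_0}$ and $\partial_{z_i}F_{y_0}$, use the structure equations \eqref{DerivE_i} together with orthonormality of $(\gamma'(t),E_2(t),\dots,E_N(t))$ to get the exact expressions for $g_{11}$, $g_{1i}$, $g_{ij}$, and then Taylor expand $\kappa_i(t)$ and $\tau_i^j(t)$ at $t=0$, absorbing the remainders into $O(|x|^3)$. The only cosmetic difference is that you decompose $\partial_t F_{y_0}$ in the frame basis before taking inner products, while the paper expands the inner products directly; the resulting identities and order bookkeeping are identical.
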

\begin{proof}
To alleviate the notations, we will write $F=F_{y_0}$.
We have 
\begin{equation}\label{Derivatives}
 {\de_t F}(x) =\gamma^\prime(t)+\sum_{j=2}^N z_j E_j^\prime(t)
\qquad \textrm{and}\qquad
 {\de_{z_i} F}(x) = E_i(t).
\end{equation}
Therefore
\begin{equation}\label{gij}
g_{ij}(x)=E_i(t)\cdot E_j(t)=\d_{ij}.
\end{equation}
By \eqref{DerivE_i} and \eqref{Derivatives}, we have
\begin{equation}\label{g_i1}
g_{1i}(x) = \sum_{l=2}^N z_l E_l^\prime(t)\cdot E_i(t)  =\sum_{j=2}^N z_j \tau_j^i(t)
\end{equation}
and
\begin{equation}\label{g_11}
g_{11}(x)= {\de_t F}(x) \cdot {\de_t F}(x) =1+2\sum_{i=2}^N z_i \kappa_i(t)+\sum_{ij=2}^N  z_i z_j \kappa_i(t)\kappa_j(t)
+\sum_{ij=2}^N z_i z_j \left(\sum_{l=2}^N \tau_i^l(t) \tau_j^l(t)\right).
\end{equation}
By   Taylor expansions, we get
$$
\kappa_i(t)=\kappa_i(0)+t\kappa_i^\prime(0)+O\left(t^2\right) 
\qquad \textrm{and}\qquad
\tau_i^k(t)=\tau_i^k(0)+t \left(\tau_i^k\right)^\prime(0)+O\left(t^2\right).
$$
Using these identities in      \eqref{g_11} and \eqref{g_i1},   we get \eqref{Vert}, thanks to   \eqref{gij}. This ends the proof.
\end{proof}
As a consequence we have the following result.
\begin{lemma}\label{MaMetricMetric}
There exists $r>0$ only depending on $\G$ and $N$, such that for every $x\in Q_r$, we have 
\begin{equation}\label{DeterminantMetric}
\sqrt{|g|}(x)=1+\sum_{i=2}^N z_i \kappa_i(0)+t\sum_{i=2}^N z_i \kappa_i^\prime(0)+\frac{1}{2}\sum_{ij=2}^N z_i z_j \kappa_i(0) \kappa_j(0)+O\left(|x|^3\right),
\end{equation}
where  $|g|$ stands for the determinant of $g$.
Moreover  $g^{-1}(x)$, the matrix  inverse   of $g(x)$,   has components given by
\begin{equation}\label{InverseMetric}
\begin{cases}
\displaystyle g^{11}(x)=1-2\sum_{i=2}^N z_i \kappa_i(0)-2t\sum_{i=2}^N z_i \kappa_i^\prime(0)+3\sum_{ij=2}^N z_i z_j \kappa_i(0) \kappa_j(0)+O\left(|x|^3\right)\\
\displaystyle g^{i1}(x)=-\sum_{j=2}^N z_j \tau^i_j(0)-t\sum_{j=2}^N z_j \left(\tau^i_j\right)^\prime(0)+2\sum_{j=2}^N z_l z_j \kappa_l(0) \tau^i_j(0)+O\left(|x|^3\right)\\
\displaystyle g^{ij}(x)=\d_{ij}+\sum_{lm=2}^N z_l z_m \tau^j_l(0) \tau^i_m(0)+O\left(|x|^3\right).
\end{cases}
\end{equation}
 
\end{lemma}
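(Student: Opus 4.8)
The plan is to read the block structure of the metric directly off Lemma \ref{MaMetric} and reduce both claims to elementary linear algebra together with a Taylor expansion. Writing $x=(t,z)$ with $z=(z_2,\dots,z_N)$, equation \eqref{Vert} shows that
\be
g(x)=\begin{pmatrix} g_{11}(x) & v(x)^{\top}\\ v(x) & \iden_{N-1}\end{pmatrix},\qquad v(x)=\big(g_{12}(x),\dots,g_{1N}(x)\big)^{\top},
\ee
so the lower-right $(N-1)\times(N-1)$ block is \emph{exactly} the identity, while $g_{11}=1+O(|x|)$ and each $g_{1i}=O(|x|)$. This structural observation is what makes the determinant and the inverse completely explicit.

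First I would compute $\sqrt{|g|}$. Because the lower block is the identity, a Schur-complement (equivalently, first-row cofactor) expansion yields the exact identity
\be
|g|(x)=g_{11}(x)-\sum_{i=2}^N \big(g_{1i}(x)\big)^2 .
\ee
Substituting \eqref{Vert} and keeping terms up to order $|x|^2$, the only quadratic contribution of $\sum_{i}(g_{1i})^2$ is $\sum_{ij}z_iz_j\big(\sum_{l}\tau^l_i(0)\tau^l_j(0)\big)=\sum_{ij}z_iz_j\,\b_{ij}(0)$, which is precisely the torsion term carried by $g_{11}$ in \eqref{Vert}; hence the $\b_{ij}$ contributions cancel. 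It then remains to expand $\sqrt{1+s}=1+\tfrac12 s-\tfrac18 s^2+O(s^3)$ with $s=|g|-1$, tracking which products survive to order $|x|^2$, to reach \eqref{DeterminantMetric}.

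For the inverse I would avoid Cramer's rule and instead write $g=\iden+P$, where $P(x)=O(|x|)$ collects the off-identity part, with $P_{11}=g_{11}-1$, $P_{1i}=P_{i1}=g_{1i}$, and $P_{ij}=0$ for $i,j\ge 2$. Since $P=O(|x|)$, the Neumann series $g^{-1}=\iden-P+P^2-P^3+\cdots$ truncates after $P^2$ modulo $O(|x|^3)$. The vanishing of the lower block simplifies the products dramatically: $(P^2)_{11}=P_{11}^2+\sum_{i\ge2}P_{1i}^2$, $(P^2)_{i1}=P_{i1}P_{11}$, and $(P^2)_{ij}=P_{i1}P_{1j}$ for $i,j\ge 2$. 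Feeding \eqref{Vert} into these and collecting terms through order $|x|^2$ produces the three formulas in \eqref{InverseMetric}, with the $\b_{ij}$ now \emph{reappearing} in $g^{11}$ (via $\sum_i P_{1i}^2$) rather than cancelling.

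Everything here is elementary, so the only genuine obstacle is bookkeeping. One must keep careful track of the homogeneity degree of each monomial in $(t,z)$ (noting that $z_i=O(|x|)$ while $t\,z_i=O(|x|^2)$), decide exactly which products contribute at order $|x|^2$ and which are absorbed into $O(|x|^3)$, and handle the index manipulations that repackage the torsion coefficients $\tau^l_i(0)$ into $\b_{ij}(0)$. Once the orders are organized correctly, both \eqref{DeterminantMetric} and \eqref{InverseMetric} follow by direct substitution.
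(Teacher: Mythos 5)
Your structural reductions are sound, and they are in fact cleaner than the paper's own argument: since $(E_2(t),\dots,E_N(t))$ is an orthonormal frame, the lower-right block of $g$ is exactly the identity, so the Schur-complement identity $|g|=g_{11}-\sum_{i=2}^N g_{1i}^2$ is exact, and your truncation $g^{-1}=\iden-P+P^2+O(|x|^3)$ with $(P^2)_{11}=P_{11}^2+\sum_{i\ge2}P_{1i}^2$, $(P^2)_{i1}=P_{i1}P_{11}$, $(P^2)_{ij}=P_{i1}P_{1j}$ is the right bookkeeping for the inverse. (The paper instead expands $\sqrt{\det(\iden+H)}$ through traces, see \eqref{Exp}, and writes $g^{-1}=\iden-A-B+A^2+O(|x|^3)$.)

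The genuine gap is that the two substitution steps you merely assert come out differently when actually performed, and the first discrepancy is fatal to the proof as written. By \eqref{g_11} and \eqref{g_i1}, your Schur identity gives, exactly,
\[
|g|(x)=1+2\sum_{i=2}^N z_i\kappa_i(t)+\sum_{i,j=2}^N z_iz_j\kappa_i(t)\kappa_j(t)=\Bigl(1+\sum_{i=2}^N z_i\kappa_i(t)\Bigr)^{2},
\]
because the $\b_{ij}$ terms cancel (as you observed) and the remaining quadratic part completes the square. Hence $\sqrt{|g|}(x)=1+\sum_{i=2}^N z_i\kappa_i(0)+t\sum_{i=2}^N z_i\kappa_i^\prime(0)+O(|x|^3)$ with \emph{no} quadratic curvature term: in your own expansion $\sqrt{1+s}=1+\frac{s}{2}-\frac{s^2}{8}+O(s^3)$, the term $-\frac{s^2}{8}=-\frac12\sum_{i,j}z_iz_j\kappa_i(0)\kappa_j(0)+O(|x|^3)$ exactly cancels the quadratic term produced by $\frac{s}{2}$. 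So your route, carried out correctly, does not ``reach \eqref{DeterminantMetric}''; it shows that the term $\frac12\sum_{ij}z_iz_j\kappa_i(0)\kappa_j(0)$ in \eqref{DeterminantMetric} should not be there. (That term is an artifact of the paper's formula \eqref{Exp}, in which the coefficient of $\left(\tr H\right)^2$ should be $\frac18$, not $\frac14$. A quick sanity check: for a round circle of radius $R$ one has $\kappa_2\equiv 1/R$, vanishing torsion, and $\sqrt{|g|}=1+z_2/R$ exactly.) Your second assertion, that the $\b_{ij}$ ``reappear'' in $g^{11}$, is also false, though harmlessly so: in $g^{11}=1-P_{11}+(P^2)_{11}+O(|x|^3)$ the contribution $-\sum_{ij}z_iz_j\b_{ij}(0)$ from $-P_{11}$ cancels against $\sum_{i\ge2}P_{1i}^2=\sum_{ij}z_iz_j\b_{ij}(0)+O(|x|^3)$, and it is precisely this cancellation that makes the outcome agree with \eqref{InverseMetric}, where $g^{11}$ carries no $\b_{ij}$; had they survived, you would have derived a formula contradicting the lemma you set out to prove. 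In short: keep the approach, but do the bookkeeping — it yields \eqref{InverseMetric} as stated, while the determinant expansion must be corrected at quadratic order (a correction that is harmless for the rest of the paper, whose estimates only use $\sqrt{|g|}=1+O(|x|)$ and the structure of the linear terms).
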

\begin{proof}
We write
$$
g(x)=id+H(x),
$$
where $id$ denotes the  identity matrix on $\R^N$  and $H $ is a symmetric matrix with components $H_{\a\b}$, for $\a,\b=1,\dots,N$, given  by
\be \label{eq:g-eq-id-H}
\begin{cases}
\displaystyle H_{11}(x)=2\sum_{i=2}^N z_i \kappa_i(0)+2t\sum_{i=2}^N z_i \kappa^\prime_i(0) +\sum_{ij=2}^N z_i z_j \kappa_i(0) \kappa_j(0)+\sum_{ij=2}^N z_i z_j \b_{ij}(0)+O\left(|x|^3\right)\\
\displaystyle H_{1i}(x)= \sum_{j=2}^N z_i \tau^i_j(0)+O\left(|x|^2\right)\\
H_{ij}(x)=0.
\end{cases}
\ee
We recall that as $|H| \to0$,
\begin{equation}\label{Exp}
\sqrt{|g|}=\sqrt{\det\left(I+H\right)}=1+\frac{\tr H}{2}+\frac{\left(\tr H\right)^2}{4}-\frac{\tr (H^2)}{4}+O\left(|H|^3\right).
\end{equation}
Now  by \eqref{eq:g-eq-id-H},  as $|x|\to 0$, we have
\be \label{eq:trH-ov-2}
\frac{\tr H}{2}=\sum_{i=2}^Nz_i \kappa_i(0)+t\sum_{i=2}^Nz_i \kappa^\prime_i(0)+\frac{1}{2}\sum_{ij=2}^N z_i z_j \kappa_i(0) \kappa_j(0)+\frac{1}{2}\sum_{ij=2}^N z_i z_j  \b_{ij}(0)+O\left(|x|^3\right),
\ee
so that  
\begin{equation}\label{l2}
\frac{\left(\tr H\right)^2}{4}=\sum_{ij=2}^N z_i z_j \kappa_i(0) \kappa_j(0)+O\left(|x|^3\right).
\end{equation}
Moreover, from \eqref{eq:g-eq-id-H}, we deduce that
$$
\tr (H^2)(x)=\sum_{\a=1}^N \left(H^2(x)\right)_{\a\a}=\sum_{\a\b=1}^N H_{\a \b}(x) H_{\b\a}(x) =\sum_{\a\b=1}^N  H^2_{\a \b}(x) =H^2_{11}(x)+2\sum_{i=2}^N H^2_{i1}(x),
$$
so that
\begin{equation}\label{l3}
-\frac{\tr (H^2)}{4}=-\sum_{ij=2}^N z_i z_j \kappa_i(0) \kappa_j(0)-\frac{1}{2} \sum_{ijl=2}^N z_i z_j \tau^l_i(0) \tau^l_j(0)+O\left(|x|^3\right).
\end{equation}
Therefore plugging the expression from \eqref{eq:trH-ov-2},    \eqref{l2} and \eqref{l3} in \eqref{Exp},   we get
$$
\sqrt{|g|}(x)=1+\sum_{i=2}^N z_i \kappa_i(0)+t\sum_{i=2}^N z_i \kappa_i^\prime(0)+\frac{1}{2}\sum_{ij=2}^N z_i z_j \kappa_i(0) \kappa_j(0)+O\left(|x|^3\right).
$$
The proof of \eqref{DeterminantMetric} is thus finished.\\

By Lemma \ref{MaMetric} we can write 
$$
g(x)=id+A(x)+B(x)+O\left(|x|^3\right),
$$
where   $A$ and $B$ are symmetric matrix with  components $(A_{\a\b})$ and  $(A_{\a\b})$, $\a,\b=1,\dots,N$, given respectively by  
\begin{equation}\label{eq:HerA}
\displaystyle A_{11}(x)=2\sum_{i=2}^N z_i \kappa_i(0),\qquad A_{i1}(x)=\sum_{j=2}^N z_j \tau^i_j(0) \qquad \textrm{and} \qquad A_{ij}(x)=0 
\end{equation}
and 
\begin{equation}\label{eq:HerB}
\begin{cases}
\displaystyle  B_{11}(x)=2t\sum_{i=2}^N z_i \kappa^\prime(0)+\sum_{i=2}^N z_i z_j\kappa_i(0)\kappa_j(0)+\sum_{ij=2}^N z_i z_j \b_{ij}(0)\\
\displaystyle B_{i1}(x)=t\sum_{j=2} z_j \left(\tau^i_j\right)^\prime(0) \qquad\textrm{and} \qquad B_{ij}(x)=0.
\end{cases}
\end{equation}
We observe that, as $|x|\to 0$, we have   
$
g^{-1}(x)=id-A(x)-B(x)+A^2(x)+O\left(|x|^3\right).
$
%
We then deduce from \eqref{eq:HerA} and \eqref{eq:HerB} that 
\begin{align*}
g^{11}(x)
&\displaystyle=1-A_{11}(x)-B_{11}(x)+A_{11}^2(x)+\sum_{i=1}^N A_{1i}^2(x)+O\left(|x|^3\right) \nonumber\\
&\displaystyle=1-2\sum_{i=2}^N z_i \kappa_i(0)-2t\sum_{i=2}^N z_i \kappa^\prime(0)+3\sum_{i=2}^N z_i z_j\kappa_i(0)\kappa_j(0)+3\sum_{ij=2}^N z_i z_j \b_{ij}(0)+O\left(|x|^3\right),
\end{align*}

\begin{align*}
g^{i1}(x)&\displaystyle=-A_{1i}(x)-B_{1i}(x)+\sum_{\a=1}^N A_{i\a} A_{1\a}+O\left(|x|^3\right)\hspace{8cm} \nonumber\\
&\displaystyle=-A_{1i}(x)-B_{1i}(x)+A_{i1}(x) A_{11}(x)+\sum_{j=2}^N A_{ij}(x) A_{1j}(x)+O\left(|x|^3\right) \nonumber\\
&\displaystyle=-\sum_{j=2}^N z_j \tau^i_j(0)-t\sum_{j=2} z_j \left(\tau^i_j\right)^\prime(0)+2\sum_{jl=2}^N z_l z_j \kappa_l(0) \tau^i_j(0)
\end{align*}

and 
\begin{align*}
g^{ij}(x)&\displaystyle=\d_{ij}-A_{ij}(x)-B_{ij}(x)+\left(A^2\right)_{ij}(x)+O\left(|x|^3\right) \hspace{8cm}  \nonumber\\
&\displaystyle=\d_{ij}-A_{ij}(x)-B_{ij}(x)+A_{1i} A_{1j}+\sum_{l=2}^N A_{il}(x) A_{jl}(x)+O\left(|x|^3\right) \nonumber\\
&\displaystyle =\d_{ij}+\sum_{{lm=2} }^N z_l z_m \tau^i_m(0) \tau^j_l(0)+O\left(|x|^3\right).
\end{align*}
This ends the proof.
\end{proof}

We recall that  the  best constant for the cylindrical Hardy-Sobolev inequality  is given by 
$$
S_{N,\s}= \min \left\{ \frac{1}{2}\int_{\R^N} |\n w|^2 dx-\frac{1}{2^*_\s}\int_{\R^N} |z|^{-\s} |w|^{2^*_\s} dx \,:\,w\in \cD^{1,2}(\R^N),\,     \right\}.
$$
Further it is   attained  by a positive function $w\in \cD^{1,2}(\R^N)$, that satisfies the Euler-Lagrange equation
\begin{equation}\label{ExpoEA}
-\D w=|z|^{-\s} w^{2^*_\s-1} \qquad \textrm{ in } \R^N,
\end{equation}
see e.g. \cite{BT}. By  \cite{FMS}, we have the last result of this section.
\begin{lemma}
For $N \geq 3$, we have
\begin{equation}\label{eq:AE}
w(x)=w(t,z)=\theta\left(|t|, |z|\right) \qquad \textrm{ for a function} \qquad \theta:\R_+ \times \R_+ \to \R_+.
\end{equation}
Moreover, there exists two constants $0<C_1<C_2$, such that
\begin{equation}\label{DecayEstimates111}
\frac{C_1}{1+|x|^{N-2}}\leq w(x) \leq \frac{C_2}{1+|x|^{N-2}} \qquad \textrm{in } \R^N.
\end{equation}
\end{lemma}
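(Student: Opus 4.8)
The plan is to prove the two assertions in the opposite order to which they are stated: first the pointwise bounds \eqref{DecayEstimates111}, and only then the symmetry \eqref{eq:AE}, since the moving plane argument for the symmetry needs to know the decay of $w$ at infinity in order to get started.

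\textbf{Boundedness and decay.} Writing $x=(t,z)\in\R\times\R^{N-1}$, the minimizer $w$ solves \eqref{ExpoEA}. I would first show $w\in L^\infty(\R^N)$ by a Moser-type iteration adapted to the Hardy--Sobolev setting: the critical weight $|z|^{-\s}$ with $\s\in[0,2)$ is controlled by testing the equation against truncated powers of $w$ and absorbing the singular term through the Hardy--Sobolev inequality \eqref{Hardy-Sobolev} itself, so that the integrability gain of each iteration step survives the presence of $|z|^{-\s}$. Elliptic regularity then gives smoothness of $w$ away from $\{z=0\}$ and continuity up to it. The decay is obtained from the Kelvin transform
\[
\tilde w(x) := |x|^{2-N}\, w\!\left(\frac{x}{|x|^{2}}\right).
\]
The algebraic heart of the matter is that $\tilde w$ solves the \emph{same} equation \eqref{ExpoEA}: using $2^*_\s-1=\frac{N+2-2\s}{N-2}$, the powers of $|x|$ produced by the Laplacian, by the weight (note $|z_y|=|z|/|x|^2$ under inversion), and by the nonlinearity cancel to order zero. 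Hence the behavior of $w$ at infinity is the behavior of $\tilde w$ near the origin; applying the $L^\infty$ bound together with a removable-singularity argument at $0$ to $\tilde w$ yields $w(x)\le C|x|^{2-N}$ for $|x|\ge1$, which combined with boundedness gives the upper bound in \eqref{DecayEstimates111}.

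\textbf{Lower bound.} Since $-\D w=|z|^{-\s}w^{2^*_\s-1}\ge0$, $w$ is a positive superharmonic function represented by $w(x)=c_N\int_{\R^N}|x-y|^{2-N}\,|z_y|^{-\s}\,w(y)^{2^*_\s-1}\,dy$, where $z_y$ is the $z$-component of $y$. Restricting this integral to a fixed ball on which the integrand is bounded below by a positive constant gives $w(x)\ge c\,|x|^{2-N}$ for $|x|\ge1$, while positivity and continuity give a uniform lower bound on compact sets. This produces the lower bound in \eqref{DecayEstimates111}.

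\textbf{Symmetry.} With \eqref{DecayEstimates111} established, I would run the method of moving planes. In the $t$-direction both \eqref{ExpoEA} and the weight $|z|^{-\s}$ are invariant under the reflections $t\mapsto 2\l-t$, and the decay \eqref{DecayEstimates111} supplies the control at infinity needed to initiate the planes and sweep them to a symmetry hyperplane $\{t=t_0\}$ about which $w$ is symmetric and monotone in $|t-t_0|$; by the translation invariance of \eqref{ExpoEA} in $t$ one normalizes $t_0=0$, so $w$ depends on $t$ only through $|t|$. In the $z$-variable the weight $|z|^{-\s}$ is invariant under rotations and under reflections across hyperplanes through the origin, so moving planes in every direction $e$ orthogonal to $E_1$ force symmetry about each such hyperplane and hence radial symmetry in $z$. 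Together these yield $w(t,z)=\theta(|t|,|z|)$, which is \eqref{eq:AE}.

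\textbf{Main obstacle.} The recurring difficulty is the singular cylindrical weight $|z|^{-\s}$, which blows up on $\{z=0\}$ and breaks translation invariance in the $z$-variable. This forces the $L^\infty$ bound and the removable-singularity step to be carried out in weighted form, and in the moving plane argument it restricts the admissible planes in the $z$-directions to those respecting the radial symmetry of the weight, while also requiring the maximum principle to be justified carefully across the singular set $\{z=0\}$. These are precisely the points handled in \cite{FMS}, from which the lemma is quoted.
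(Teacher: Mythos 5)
The paper does not prove this lemma at all: it is imported from the literature (the sentence immediately preceding it reads ``By \cite{FMS}, we have the last result of this section''), so your sketch can only be compared against the cited source rather than an internal argument. Measured against that, your outline is the standard and correct one: Moser iteration for boundedness, Kelvin transform for the upper decay bound — and your power count is right, since $-N-2+2\sigma+(N-2)(2^*_\sigma-1)=-N-2+2\sigma+N+2-2\sigma=0$, so the equation is indeed preserved under inversion — superharmonic comparison for the lower bound, and moving planes for the symmetry. The one imprecise point is the $z$-symmetry step: invariance of $|z|^{-\sigma}$ under reflections across hyperplanes \emph{through the origin} does not by itself yield symmetry about those hyperplanes (that is exactly what must be proved); the sweep has to run over off-center planes $\{z\cdot e=\lambda\}$, $\lambda>0$, which are \emph{not} symmetries of the weight, and what saves the comparison there is monotonicity rather than invariance: for $z\cdot e>\lambda>0$ the reflected point satisfies $|z^{\lambda}|^2=|z|^2-4\lambda(z\cdot e-\lambda)\le|z|^2$, hence $|z^{\lambda}|^{-\sigma}\ge|z|^{-\sigma}$, which is the inequality that lets the maximum principle close the argument and push the critical plane to $\lambda=0$. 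This is precisely how \cite{FMS} proceed, so your proposal is essentially a faithful reconstruction of the proof the paper delegates to its reference, with that one detail stated loosely.
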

\section{Existence of minimzers for $\mu(\O, \G, h, b)$ in dimension $N\geq 4$}\label{s:Exits-N-geq4}
We consider $\O$ a bounded domain of $\R^N$, $N\geq3$ and $\G\subset \O$ be a smooth closed curve.  For $u\in H^1_0(\O)\setminus\{0\}$, we define the functional
\be\label{eq:def-J-u} 
J\left(u\right):=\displaystyle \frac{1}{2}\int_\O |\n u |^2 dy+\frac{1}{2}\int_\O h u^2dy+\frac{1}{2+\delta} \int_\O b u^{2+\d} dy-\frac{1}{2^*_\s}\int_\O \rho^{-\s}_\G|u|^{2^*_\s} dy.
\ee
We let 
$\eta \in \calC^\infty_c\left(F_{y_0}\left({Q}_{2r}\right)\right)$ be such that
$$
0\leq \eta \leq 1 \qquad \textrm{ and }\qquad \eta \equiv 1 \quad \textrm{in }  \B_r .
$$
For $\e>0$, we consider  $u_\e: \O \to  \R$ given  by
\begin{equation}\label{eq:TestFunction-w}
u_\e(y):=\e^{\frac{2-N}{2}} \eta(F^{-1}_{y_0}(y)) w \left(\e^{-1} {F^{-1}_{y_0}(y)}  \right).
\end{equation}
In particular,  for every $x=(t,z)\in \R\times \R^{N-1}$, we have 
\begin{equation}\label{eq:TestFunction-th}
u_\e\left(F_{y_0}(x)\right):=\e^{\frac{2-N}{2}}\eta\left(x\right)\th \left( {|t|}/{\e}, {|z|}/{\e} \right).
\end{equation}
It is clear that $u_\e \in H^1_0(\O).$ Then we have the following
\begin{proposition}\label{Prop1}
For all $N \geq 4$, we have
\begin{equation}\label{Expansion}
J(u_\e)=S_{N, \s}+\e^{2-\frac{\d(N-2)}{2}} b(y_0)\int_{\R^N} w^{\d+2} dx+o\left(\e^{2-\frac{\d(N-2)}{2}}\right),
\end{equation}
as $\e \to 0$.
\end{proposition}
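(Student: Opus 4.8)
The plan is to pull the whole functional \eqref{eq:def-J-u} back to the Fermi-type coordinates $(t,z)\in Q_{2r}$ through $F_{y_0}$, so that by \eqref{eq:rho_Gamm-is-mod-z} the singular weight becomes $\rho_\G^{-\s}=|z|^{-\s}$, the volume element is $\sqrt{|g|}\,dx$, and the Dirichlet energy density is $g^{\a\b}\,\partial_\a(u_\e\circ F_{y_0})\,\partial_\b(u_\e\circ F_{y_0})$, with $\sqrt{|g|}$ and $g^{\a\b}$ expanded as in Lemma \ref{MaMetric} and Lemma \ref{MaMetricMetric}. Writing $v_\e(x):=\e^{\frac{2-N}{2}}w(x/\e)$, so that $u_\e\circ F_{y_0}=\eta\,v_\e$ by \eqref{eq:TestFunction-th}, I would treat the four terms of $J(u_\e)$ separately, each time performing the rescaling $x=\e\xi$ and using the decay estimate \eqref{DecayEstimates111} to justify passing to the limit by dominated convergence.

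The perturbative term produces the announced correction. Changing variables and rescaling by $x=\e\xi$, the prefactor is $\e^{\frac{(2-N)(2+\d)}{2}+N}=\e^{2-\frac{\d(N-2)}{2}}$, while $b(F_{y_0}(\e\xi))\to b(y_0)$, $\eta(\e\xi)\to1$ and $\sqrt{|g|}(\e\xi)\to1$ pointwise. Since $\int_{\R^N}w^{2+\d}<\infty$ — which holds because $N\ge4$ and $\d>0$ force $(N-2)(2+\d)>N$, so that \eqref{DecayEstimates111} is integrable at infinity — dominated convergence yields
\[
\frac{1}{2+\d}\int_\O b\,u_\e^{2+\d}\,dy=\e^{2-\frac{\d(N-2)}{2}}\Big(\frac{b(y_0)}{2+\d}\int_{\R^N}w^{2+\d}\,dx+o(1)\Big),
\]
which is the dominant correction displayed in \eqref{Expansion}.

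For the critical (scale-invariant) pair, the leading part of $\frac12\int_\O|\n u_\e|^2-\frac{1}{2^*_\s}\int_\O\rho_\G^{-\s}|u_\e|^{2^*_\s}$ is, after rescaling, exactly $\frac12\int_{\R^N}|\n w|^2-\frac{1}{2^*_\s}\int_{\R^N}|z|^{-\s}w^{2^*_\s}=S_{N,\s}$, since $w$ attains $S_{N,\s}$. The corrections stem from the metric factors $g^{\a\b}-\d^{\a\b}$ and $\sqrt{|g|}-1$ and from the cutoff $\eta$. The decisive observation is a parity argument: because $w(t,z)=\th(|t|,|z|)$ is even in each variable $z_i$, the densities $w^{2^*_\s}$ and $|\n w|^2$ are even in each $z_i$ while $\partial_t w\,\partial_{z_i}w$ is odd in $z_i$; combined with $\t^i_i(0)=0$ from \eqref{eq:tau-antisymm}, every term linear in $z$ in the expansions of Lemma \ref{MaMetric} and Lemma \ref{MaMetricMetric} — precisely the terms carrying $\k_i(0)$ and $\t^i_j(0)$ — integrates to zero over the symmetric domain. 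Hence no correction of order $\e$ survives, the remaining metric corrections are $O(\e^2)$ (with a harmless $\e^2\log(1/\e)$ at $N=4$), and the cutoff, supported where $v_\e=O(\e^{\frac{N-2}{2}})$, contributes only $O(\e^{N-2})$. The same rescaling gives $\frac12\int_\O h u_\e^2=O(\e^2)$ (again $O(\e^2\log(1/\e))$ for $N=4$).

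It remains to compare exponents. Since $0<\d<\frac{4}{N-2}$ one has $0<2-\frac{\d(N-2)}{2}<2$, and for $N\ge4$ also $2-\frac{\d(N-2)}{2}<N-2$; therefore $\e^2$, $\e^2\log(1/\e)$ and $\e^{N-2}$ are all $o\big(\e^{2-\frac{\d(N-2)}{2}}\big)$, and collecting the four estimates proves \eqref{Expansion}. I expect the main obstacle to be the bookkeeping in the critical pair: rigorously justifying that every first-order curvature and torsion contribution cancels through the parity of $w$ and the antisymmetry \eqref{eq:tau-antisymm}, controlling the cutoff contributions and the integrals over the dilated domains $\e^{-1}Q_{2r}$, and tracking the borderline logarithmic growth at $N=4$ carefully enough to confirm it stays strictly below the threshold $\e^{2-\frac{\d(N-2)}{2}}$.
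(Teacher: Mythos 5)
Your proposal is correct and follows essentially the same route as the paper: the paper decomposes $J(u_\varepsilon)$ into the unperturbed functional $J_1$ --- whose expansion $J_1(u_\varepsilon)=S_{N,\sigma}+O(\varepsilon^2)$ (resp.\ $O(\varepsilon^2|\log\varepsilon|)$ for $N=4$) it simply cites from Fall--Thiam rather than reproving via the parity/cancellation argument you sketch --- plus the perturbative $b$-term, which it computes exactly as you do (change of variables, rescaling, continuity of $b$, decay of $w$), and then concludes by comparing exponents using $0<2-\tfrac{\delta(N-2)}{2}<2$. Incidentally, your coefficient $\tfrac{b(y_0)}{2+\delta}$ is the correct constant (the paper's statement of the proposition drops the factor $\tfrac{1}{2+\delta}$ coming from the definition of $J$), a harmless discrepancy since only the sign of $b(y_0)$ is used in the proof of Theorem \ref{th:main1}.
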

The proof of Proposition \ref{Prop1} is divided in two parts, Lemma \ref{Lem1} and Lemma \ref{Lem2} below. For that we set
$$
J_1\left(u\right):=\displaystyle \frac{1}{2}\int_\O |\n u |^2 dx  + \frac{1}{2}\int_\O h u^2 dx-\frac{1}{2^*_\s}\int_\O \rho^{-\s}_\G|u|^{2^*_\s} dx,
$$
the following is due to the second author and Fall  \cite{Fall-Thiam}.
\begin{lemma}\label{Lem1}
We have
\begin{align}\label{eq:expans-J-u-eps}
J_1\left(u_\e\right)=S_{N,\s}+
\begin{cases}
 O(\e^2) &\qquad \textrm{for all $N \geq 5$}\\\\
 O(\e^2 |\log(\e)|) &\qquad \textrm{for all $N=4$}.
\end{cases}
\end{align} 
\end{lemma}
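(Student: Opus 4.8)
The plan is to pull the whole functional back to the fixed chart $F_{y_0}$ and then rescale at the concentration scale $\e$, so that the local geometry of $\G$ enters only through the expansions of the metric in Lemmas \ref{MaMetric}--\ref{MaMetricMetric}. Writing $U_\e(x):=\e^{(2-N)/2}\eta(x)w(x/\e)$ and using \eqref{eq:rho_Gamm-is-mod-z}, the change of variables $y=F_{y_0}(x)$ turns each of the three integrals defining $J_1(u_\e)$ into an integral over $Q_{2r}$ against the density $\sqrt{|g|}$ and, for the Dirichlet term, the inverse metric $g^{\alpha\beta}$. I would treat the three pieces separately.

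For the gradient and weight terms I would substitute $x=\e X$. A direct power count, using $2^*_\s=2(N-\s)/(N-2)$, shows that both integrals are scale invariant at leading order, so after rescaling the leading contributions are exactly $\frac12\int_{\R^N}|\n w|^2\,dX$ and $\frac{1}{2^*_\s}\int_{\R^N}|Z|^{-\s}w^{2^*_\s}\,dX$, whose difference is $S_{N,\s}$ since $w$ attains it. The corrections come from expanding $g^{\alpha\beta}(\e X)$ and $\sqrt{|g|}(\e X)$. The key structural point is that the first-order terms in these expansions are linear in the normal variable $Z$ (they carry the curvatures $\kappa_i(0)$ and torsions $\tau^i_j(0)$), hence odd in $Z$, while $w(t,z)=\theta(|t|,|z|)$ from \eqref{eq:AE} is even in each variable; integration over the symmetric rescaled domain therefore annihilates every $O(\e)$ contribution. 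What survives is the second-order part, giving a remainder controlled by $\e^2\int|X|^2\big(|\n w|^2+|Z|^{-\s}w^{2^*_\s}\big)\,dX$.

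For the potential term, rescaling yields $\frac{\e^2}{2}\int h(F_{y_0}(\e X))\,\eta^2(\e X)\,w^2\,\sqrt{|g|}(\e X)\,dX$, with leading factor $\frac{\e^2}{2}h(y_0)\int w^2$. Here the decay estimate \eqref{DecayEstimates111}, $w(x)\le C_2|x|^{2-N}$ for large $|x|$, is decisive: the potential integrand $w^2$ and the second-order gradient remainder $|X|^2|\n w|^2$ both decay like $|x|^{4-2N}$, so all remainder integrals converge for $N\ge5$, whereas the logarithmic borderline $\int_{|X|\le r/\e}|x|^{-4}\sim|\log\e|$ appears exactly when $N=4$. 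This reproduces the dichotomy in \eqref{eq:expans-J-u-eps}. The cutoff error terms, supported on $\{r\le|x|\le 2r\}$, i.e. on $\{|X|\gtrsim r/\e\}$, are estimated by \eqref{DecayEstimates111} to be of higher order and can be absorbed.

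The main obstacle is the bookkeeping of the symmetry cancellation: one must verify that every first-order-in-$Z$ term coming from $g^{\alpha\beta}$ and $\sqrt{|g|}$, including the cross terms between the Dirichlet derivative of $w$ and the derivative of the cutoff $\eta$, is genuinely odd and hence integrates to zero against the even profile $w$, so that no spurious $O(\e)$ term leaks through. Once this is secured, the dimension count via \eqref{DecayEstimates111} delivers the stated $O(\e^2)$ versus $O(\e^2|\log\e|)$ behaviour at once; this is precisely the computation carried out by the second author and Fall in \cite{Fall-Thiam}.
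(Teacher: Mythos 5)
Your proposal is correct and is essentially the paper's approach: the paper offers no proof of Lemma \ref{Lem1} at all, deferring entirely to the second author and Fall \cite{Fall-Thiam}, and your sketch (pullback by $F_{y_0}$, rescaling at scale $\e$, cancellation of the $O(\e)$ metric corrections by parity, and the decay estimate \eqref{DecayEstimates111} producing the convergent remainders for $N\ge 5$ and the logarithm for $N=4$) is precisely the computation carried out in that reference. One small caution on the step you yourself flag: for the cross terms $g^{1i}\,\partial_t w\,\partial_{z_i}w$ the product $z_i z_j$ is \emph{even} under $z\mapsto -z$, so the cancellation there comes from oddness in $t$ (since $\partial_t w$ is odd in $t$ by \eqref{eq:AE}) together with the antisymmetry $\tau^i_j=-\tau^j_i$ of \eqref{eq:tau-antisymm}, not from oddness in the normal variable alone; with that verification the argument closes exactly as you describe.
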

We finish the proof by the following
\begin{lemma}\label{Lem2}
 We have
$$
\begin{cases}
 \displaystyle\int_\O b u_\e^{2+\d}dx=\displaystyle\e^{2-\frac{\d(N-2)}{2}}b(y_0)\int_{\R^N} w^{\d+2} dx+O\left(\e^{2}\right) &\qquad \textrm{ for $N \geq 4$}\\\\
\displaystyle\int_\O b u_\e^{2+\d}dx=\displaystyle \e^{2-\frac{\d}{2}}b(y_0) \int_{Q_{r/\e}} w^{\d+2} dx+O\left(\e^{2}\right) &\qquad\textrm{ for $N=3$ and $\delta\leq 1$}\\\\
\displaystyle\int_\O b u_\e^{2+\d}dx=\displaystyle \e^{2-\frac{\d}{2}}b(y_0) \int_{\R^N} w^{\d+2} dx+O\left(\e^{1+\frac{\d}{2}}\right) &\qquad\textrm{ for $N=3$ and $\delta>1$}\\\\
\end{cases}
$$
as $\e \to 0$.
\end{lemma}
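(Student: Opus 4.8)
The plan is to push the integral onto the model space by means of the Fermi-type coordinates $F_{y_0}$ followed by a rescaling, and then to extract the leading contribution using the decay of $w$ from \eqref{DecayEstimates111} together with the symmetry of $\th$ from \eqref{eq:AE} to discard the odd-order geometric corrections.

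First I would use that $u_\e$ is supported in $F_{y_0}(Q_{2r})$ and change variables $y=F_{y_0}(x)$, whose Jacobian is $\sqrt{|g|}(x)$ from Lemma \ref{MaMetricMetric}. Substituting \eqref{eq:TestFunction-th} gives
\begin{equation*}
\int_\O b\, u_\e^{2+\d}\,dx=\e^{\frac{(2-N)(2+\d)}{2}}\int_{Q_{2r}} b(F_{y_0}(x))\,\eta^{2+\d}(x)\,\th^{2+\d}\left(|t|/\e,|z|/\e\right)\sqrt{|g|}(x)\,dx.
\end{equation*}
Rescaling $x=\e X$ and collecting the powers of $\e$ (the prefactor becomes exactly $\e^{2-\frac{\d(N-2)}{2}}$, the announced scale, since $\frac{(2-N)(2+\d)}{2}+N=2-\frac{\d(N-2)}{2}$) reduces everything to analysing
\begin{equation*}
\mathcal{I}_\e:=\int_{Q_{2r/\e}} b(F_{y_0}(\e X))\,\eta^{2+\d}(\e X)\,w^{2+\d}(X)\,\sqrt{|g|}(\e X)\,dX .
\end{equation*}

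Next I would identify the leading term. Since $b\circ F_{y_0}$ is continuous, $b(F_{y_0}(\e X))\to b(y_0)$; since $\eta\equiv1$ on $Q_r$, one has $\eta(\e X)=1$ on $Q_{r/\e}$; and by \eqref{DeterminantMetric}, $\sqrt{|g|}(\e X)=1+O(\e|X|)$ with the $O(\e|X|)$ term odd in $z$. Because $w=\th(|t|,|z|)$ is even in $t$ and radial in $z$ by \eqref{eq:AE}, this first-order term integrates to zero over the symmetric domains, so the first genuine correction is of second order in $\e$. The decisive dichotomy is the integrability of $w^{2+\d}$: by \eqref{DecayEstimates111} one has $w^{2+\d}\lesssim(1+|X|)^{-(N-2)(2+\d)}$, hence $w^{2+\d}\in L^1(\R^N)$ iff $(N-2)(2+\d)>N$, i.e. automatically when $N\ge4$, and when $N=3$ iff $\d>1$. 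I would then split into the three cases accordingly. For $N\ge4$ (and likewise $N=3$, $\d>1$) the integral $\int_{\R^N}w^{2+\d}$ converges, so I extend $Q_{2r/\e}$ to $\R^N$ and estimate the tail $\int_{\R^N\setminus Q_{2r/\e}}w^{2+\d}$ together with the surviving (even) second-order geometric terms of \eqref{DeterminantMetric} against \eqref{DecayEstimates111}; bookkeeping of the exponents yields the remainder $O(\e^2)$ for $N\ge4$ and $O(\e^{1+\d/2})$ for $N=3$, $\d>1$. For $N=3$, $\d\le1$, the integral $\int_{\R^3}w^{2+\d}$ diverges, so one cannot pass to $\R^N$; instead I keep the truncated integral $\int_{Q_{r/\e}}w^{2+\d}$, over the region where $\eta\equiv1$, as the leading factor and bound the remaining geometric and cut-off contributions to reach the stated error.

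The main obstacle is precisely this borderline (non-)integrability for $N=3$: the natural leading coefficient $\int w^{2+\d}$ ceases to be finite, so the relevant moments $\int|X|^{j}w^{2+\d}$ must be truncated at radius $\sim r/\e$ and therefore grow with $\e$, and one must determine carefully which truncated moment and which tail dominates in order to obtain the correct error exponents. The symmetry of $\th$ is essential here, as it annihilates the otherwise largest, first-order, geometric corrections; what remains is a delicate comparison of competing powers of $\e$ governed by the decay rate in \eqref{DecayEstimates111}, which is exactly what separates the $N\ge4$ regime from the two $N=3$ sub-cases.
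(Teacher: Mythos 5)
Your proposal follows essentially the same route as the paper's proof: localize to $F_{y_0}(Q_{2r})$, change variables and rescale to extract the factor $\e^{2-\frac{\d(N-2)}{2}}$, replace $b$ by $b(y_0)$ using continuity, invoke the symmetry \eqref{eq:AE} to annihilate the first-order (odd in $z$) term of \eqref{DeterminantMetric}, and split the cases according to whether $w^{2+\d}\in L^1(\R^N)$, with tails controlled by \eqref{DecayEstimates111}. For $N\ge 4$ and for $N=3$, $\d>1$, your exponent bookkeeping is correct and reproduces exactly the paper's estimates.

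There is, however, one step that would fail, and it is worth flagging even though the paper's own proof contains the identical defect: in the case $N=3$, $\d\le 1$, no bookkeeping can produce the remainder $O(\e^2)$, because that bound is false. The cut-off annulus alone contributes
$$
\e^{2-\frac{\d}{2}}\int_{Q_{2r/\e}\setminus Q_{r/\e}}w^{2+\d}\,dx\;\approx\;\e^{2-\frac{\d}{2}}\Bigl(\frac{r}{\e}\Bigr)^{1-\d}\;=\;\e^{1+\frac{\d}{2}}\,r^{1-\d}\qquad (\d<1),
$$
with a logarithmic modification when $\d=1$, and the surviving second-order metric term $\e^{2-\frac{\d}{2}}\cdot\e^{2}\int_{Q_{r/\e}}|x|^{2}w^{2+\d}\,dx\approx\e^{1+\frac{\d}{2}}\,r^{3-\d}$ is of the same size. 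Since $1+\frac{\d}{2}<2$, both dominate $\e^{2}$; indeed for $\d<1$ they are of the same order in $\e$ (and in $r$) as the retained ``leading'' term $\e^{2-\frac{\d}{2}}\int_{Q_{r/\e}}w^{2+\d}\,dx$ itself, as one sees by taking $b\equiv-1$, so no cancellation can rescue the claim. The paper's intermediate assertion that these quantities are $O(\e^{2})$ ``for all $N\ge3$'' is wrong for exactly this reason; the statement one can actually prove is that for $N=3$, $\d\le1$ the left-hand side equals $\e^{2-\frac{\d}{2}}b(y_0)\int_{Q_{r/\e}}w^{2+\d}\,dx+O\bigl(\e^{1+\frac{\d}{2}}\bigr)$, which suffices downstream because in Proposition \ref{Expansion-no-h} every quantity of order $o(\e)$ is absorbed into $\calO_r(\e)$. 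A smaller blemish, also shared with the paper: since $b$ is only assumed continuous, replacing $b(F_{y_0}(\e x))$ by $b(y_0)$ costs $o(1)$ times the main order, i.e.\ $o\bigl(\e^{2-\frac{\d(N-2)}{2}}\bigr)$ rather than $O(\e^{2})$, so the quantitative remainders you announce (like the paper's) implicitly require more regularity of $b$ than is hypothesized; again this is harmless for Proposition \ref{Prop1} and the theorems.
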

\begin{proof}
We have
$$
\int_{\O} b(x) u_\e^{2+\d} dx=\int_{F_{y_0}(Q_r)} b(x) u_\e^{2+\d} dx+\int_{F_{y_0}(Q_{2r}) \setminus F_{y_0}(Q_r)} b(x) u_\e^{2+\d} dx.
$$
Since $b$ is continuous and $r$ is small, then by the change of variable formula $y=\frac{F(x)}{\e}$, we have
\begin{align*}
\int_{\O} b(x) u_\e^{2+\d} dx&=b(y_0) \e^{2-\frac{\d(N-2)}{2}} \int_{Q_{r/\e}}  w^{2+\d} dx\\\
&+O\left(\e^{4-\frac{\d(N-2)}{2}} \int_{Q_{r/\e}} |x|^2 w^{2+\d} dx+\e^{2-\frac{\d(N-2)}{2}}\int_{Q_{2r/\e} \setminus Q_{r/\e}} w^{2+\d} dx\right)\\\
&=b(y_0) \e^{2-\frac{\d(N-2)}{2}} \int_{Q_{r/\e}}  w^{2+\d} dx\\\
&+O\left(\e^{4-\frac{\d(N-2)}{2}} \int_{Q_{r/\e}} |x|^2 w^{2+\d} dx+\e^{2-\frac{\d(N-2)}{2}}\int_{Q_{2r/\e} \setminus Q_{r/\e}} w^{2+\d} dx\right)
\end{align*}
Thanks to \eqref{DecayEstimates111}, we have
$$
\e^{4-\frac{\d(N-2)}{2}} \int_{Q_{r/\e}} |x|^2 w^{2+\d} dx+\e^{2-\frac{\d(N-2)}{2}}\int_{ Q_{2r/\e}\setminus Q_{r/\e}} w^{2+\d} dx=O(\e^2) \quad \textrm{ for all } N \geq 3
$$
and 
$$
\e^{2-\frac{\d(N-2)}{2}}\int_{Q_{2r/\e} \setminus Q_{r/\e}} w^{2+\d} dx=O(\e^2) \qquad \textrm{ for all  $N \geq 4$}.
$$
We finish by noticing that, for $N=3$, we have
$$
\int_{\R^N \setminus Q_{r/\e}} w^{2+\d} dx= O(\e^{\d-1}).
$$
This then ends the proof of the Lemma.
\end{proof}
\section{Existence of minimizer for $\mu_h(\O,\G, h, b)$ in dimension three}\label{s:3D-case}
We consider the function
$$
\cR:\R^3\setminus\{0\}\to \R,    \qquad x\mapsto  \cR(x)=\frac{1}{|x|}
$$
which  satisfies
\begin{equation}\label{eq:Green-R3-3D}
-\D \cR=0 \qquad \textrm{ in $\R^3\setminus\{0\}$. }  
\end{equation}
We denote by $G$ the solution to the equation 
\begin{equation}\label{eq:Green-3D}
\begin{cases}
-\D_x G(y, \cdot)+h G(y,\cdot)=0& \qquad \textrm{  in $\O\setminus \{y\}$. }  \\
G(y,\cdot )=0&   \qquad \textrm{  on $\de \O $, }
\end{cases}
\end{equation}
and satisfying
\be\label{eq:expand-Green-trace}
G(x,y)=  \cR(x-y)+O(1)\qquad\textrm{ for $x, y\in \O$ and $x\not= y$.}
\ee
We note that $G$ is proportional to the Green function of $-\D+h$ with zero Dirichlet data.\\
We let $\chi\in C^\infty_c(-2,2)$ with $\chi \equiv 1$ on $(-1,1)$ and $0\leq \chi<1$. For $r>0$,   we  consider the cylindrical symmetric cut-off function
\be\label{eq:def-cut-off-cylind} 
\eta_r(t,z)=\chi\left(\frac{|t|+|z|}{r} \right) \qquad \qquad\textrm{ for every  $(t,z)\in \R\times \R^2$}.
\ee
It is clear that 
$$
\eta_r\equiv 1\quad \textrm{ in $\B_r$},\qquad \eta_r\in H^1_0({Q}_{2r}),\qquad |\n \eta_r|\leq  \frac{C}{r} \quad\textrm{ in $\R^3$}.
$$
For $y_0\in \O$, we let  $r_0\in (0,1)$  such that   
\be\label{eq:def-r0} 
y_0+ Q_{2r_0}\subset\O. 
 \ee  
We define the function $M_{y_0}: Q_{2r_0}\to \R$ given by
\begin{equation}\label{C9}
M_{y_0}(x):=  G (y_0,x+y_0)-{\eta_r}(x)\frac{1}{|x|}     \qquad \textrm{ for every $x\in Q_{2r_0}$}.
\end{equation}
 It follows from  \eqref{eq:expand-Green-trace} that  $M_{y_0}\in  L^\infty(Q_{r_0})$. By \eqref{eq:Green-3D} and \eqref{eq:Green-R3-3D}, 
 $$
|-\D {M}_{y_0}(x)+h(x) {M}_{y_0}(x)|\leq \frac{C}{|x|}= C \cR(x) \qquad \textrm{ for every  $x\in Q_{r_0}$},
 $$
 whereas $\cR\in L^p( Q_{r_0})$ for every $p\in (1,3)$. Hence by
elliptic regularity theory, $M_{y_0}\in W^{2,p}(Q_{r_0/2})$ for every $p\in (1,3)$. Therefore by Morrey's embdding theorem, we deduce that 
\be \label{eq:regul-beta}
\|M_{y_0}\|_ {C^{1,\varrho}(Q_{r_0/2})}\leq C \qquad \textrm{ for every $\varrho\in (0,1)$.}
\ee
In view of \eqref{eq:expans-Green}, the mass of the operator $-\D+h$ in $\O$ at the point $y_0\in   \O$ is given by  
\be \label{eq:def-mass}
  \textbf{m}(y_0)={M}_{y_0}(0).
\ee
We recall that the positive  ground state solution $w$  
satisfies 
\begin{equation}\label{eq:w-gorund-3D}
-\D w=|z|^{-\s} w^{2^*_\s-1} \qquad \textrm{in } \R^3,
\end{equation}
where $x=(t,z)\in \R \times \R^{2}$. In addition by \eqref{DecayEstimates111}, we have 
\begin{equation}\label{eq:up-low-bound-w-3D}
\frac{C_1}{1+|x|} \leq w(x) \leq \frac{C_2}{1+|x|} \qquad \textrm{ in $  \R^3$.}
\end{equation}
The following result will be crucial in the sequel.
\begin{lemma}\label{lem:v-to-cR}
Consider the function $v_\e:  \R^3\setminus\{0\}\to \R$ given by 
$$
v_\e(x)= \e^{-1} w\left(\frac{x}{\e}\right).
$$
Then there exists a   constant $\textbf{c}>0$ and a sequence $(\e_n)_{n\in \N}$    (still denoted by $\e$)  such that 
$$
v_\e (x) \to  \frac{\textbf{c}}{|x|}\qquad \textrm{ for  all most every  $x\in \R^3 $ } 
$$
and 
\be\label{eq:nv-eps-to-nv-C1}
v_\e (x) \to  \frac{\textbf{c}}{|x|}   \qquad \textrm{ for  every   $x\in \R^3\setminus\{z=0\}$.  } 
\ee
\end{lemma}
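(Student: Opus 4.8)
The plan is to exploit the equation satisfied by the rescaled functions $v_\e$ together with the two-sided bound \eqref{eq:up-low-bound-w-3D}, and then to identify the limit through the classification of harmonic functions with an isolated singularity. First I would record the equation obeyed by $v_\e$. Since $w$ solves \eqref{eq:w-gorund-3D} and $2^*_\s=2(3-\s)=6-2\s$ in dimension three, a direct scaling computation (using $-\D v_\e(x)=\e^{-3}(-\D w)(x/\e)$ and $|z/\e|^{-\s}=\e^{\s}|z|^{-\s}$) gives, for $x=(t,z)\in\R\times\R^2$,
\[
-\D v_\e(x)=\e^{2-\s}\,|z|^{-\s}\,v_\e(x)^{2^*_\s-1}\qquad\textrm{in }\R^3\setminus\{z=0\}.
\]
Because $\s\in[0,2)$ the prefactor $\e^{2-\s}\to 0$. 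Moreover \eqref{eq:up-low-bound-w-3D} rescales to
\[
\frac{C_1}{\e+|x|}\le v_\e(x)\le\frac{C_2}{\e+|x|}\qquad\textrm{in }\R^3,
\]
so the family $(v_\e)$ is bounded in $L^\infty_{\loc}(\R^3\setminus\{0\})$, uniformly for $\e$ small, and is squeezed between $C_1/|x|$ and $C_2/|x|$ in the limit.

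Next I would pass to the limit. On any compact subset of $\R^3\setminus\{z=0\}$ the weight $|z|^{-\s}$ is bounded, hence the right-hand side above is bounded and converges uniformly to $0$; elliptic regularity then yields uniform $C^{1,\varrho}_{\loc}(\R^3\setminus\{z=0\})$ estimates, and Arzel\`a--Ascoli produces a subsequence $(\e_n)$ with $v_{\e_n}\to v_0$ in $C^1_{\loc}(\R^3\setminus\{z=0\})$. To propagate harmonicity across the axis $\{z=0\}$ I would argue distributionally on $\R^3\setminus\{0\}$: since $\s<2$ the weight $|z|^{-\s}$ is locally integrable in $\R^3$, so on compact subsets of $\R^3\setminus\{0\}$ the right-hand side tends to $0$ in $L^1_{\loc}$, and therefore $-\D v_0=0$ in the sense of distributions on $\R^3\setminus\{0\}$. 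By Weyl's lemma $v_0$ is harmonic there, and passing to the limit in the two-sided bound gives $C_1/|x|\le v_0(x)\le C_2/|x|$.

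Finally I would identify $v_0$. A function harmonic in $\R^3\setminus\{0\}$ and controlled by $C_2/|x|$ admits a spherical-harmonic expansion $v_0=\sum_{l\ge0}\bigl(a_l r^l+b_l r^{-l-1}\bigr)Y_l$ with $r=|x|$; the decay $v_0=O(r^{-1})$ as $r\to\infty$ forces every $a_l=0$, while the singularity bound $v_0=O(r^{-1})$ as $r\to 0$ forces $b_l=0$ for all $l\ge1$. Hence $v_0=\textbf{c}/|x|$ with $\textbf{c}=b_0$, and the lower bound $v_0\ge C_1/|x|$ gives $\textbf{c}\ge C_1>0$. The $C^1_{\loc}(\R^3\setminus\{z=0\})$ convergence then yields \eqref{eq:nv-eps-to-nv-C1}, and since $\{z=0\}$ is Lebesgue-null this also gives the almost-everywhere convergence on all of $\R^3$.

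The hard part will be the treatment of the singular set $\{z=0\}$ of the weight: one must combine the smooth $C^1$ convergence away from the axis with the distributional argument across it—made possible precisely by the local integrability $|z|^{-\s}\in L^1_{\loc}(\R^3)$ for $\s<2$—in order to conclude that $v_0$ is globally harmonic off the origin, and hence exactly $\textbf{c}/|x|$. The extraction of a subsequence is essential here, since the compactness in $C^1_{\loc}$ and the $L^1_{\loc}$ vanishing of the source only deliver subsequential limits.
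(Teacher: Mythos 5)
Your proof is correct, and it is worth noting that the paper does not actually prove this lemma: it simply defers to \cite{Fall-Thiam}, Lemma 5.1. Your argument reconstructs, in a self-contained way, the scheme behind that reference: the rescaled equation $-\D v_\e=\e^{2-\s}|z|^{-\s}v_\e^{2^*_\s-1}$ (your exponent computation $2^*_\s+\s-4=2-\s$ in dimension $3$ is right), the rescaled two-sided bound $C_1/(\e+|x|)\le v_\e\le C_2/(\e+|x|)$, compactness in $C^1_{\loc}(\R^3\setminus\{z=0\})$ by interior elliptic estimates, harmonicity of the limit on $\R^3\setminus\{0\}$, and classification of that limit. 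Your two choices at the end, namely crossing the axis distributionally via $|z|^{-\s}\in L^1_{\loc}(\R^3)$ (valid precisely because $\s<2$) rather than a capacity/removable-singularity argument, and classifying via the spherical-harmonic expansion rather than B\^{o}cher's theorem, are both standard and correct, and the lower bound $v_0\ge C_1/|x|$ indeed forces $\textbf{c}\ge C_1>0$. Two small points you leave implicit and should state explicitly: (i) the rescaled equation holds in the distributional sense on all of $\R^3$, because $w$ is a weak $\cD^{1,2}$ solution of \eqref{eq:w-gorund-3D}; this is what licenses testing against $\phi\in C^\infty_c(\R^3\setminus\{0\})$ whose support meets the axis $\{z=0\}$; and (ii) to pass to the limit in $\int_{\R^3} v_{\e_n}(-\D\phi)\,dx$ you need $v_{\e_n}\to v_0$ in $L^1_{\loc}(\R^3\setminus\{0\})$, not merely almost everywhere; this follows at once by dominated convergence, since $0<v_{\e_n}\le C_2/|x|\in L^1_{\loc}(\R^3)$ and the convergence is pointwise off a null set. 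With those two sentences added, your proof is complete and delivers slightly more than the statement asks (locally uniform $C^1$ convergence off the axis, not just pointwise convergence).
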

For a proof, see for instance [\cite{Fall-Thiam}, Lemma 5.1]
%
%
%
%
%
 %
 
\qquad Next, given $y_0\in \G\subset\O\subset \R^3$, we let $r_0$ as defined in \eqref{eq:def-r0}. For    $r\in (0, r_0/2)$,  we consider $F_{y_0}: Q_r\to \O$ (see Section \ref{s:Geometric-prem}) parameterizing a neighborhood of $y_0$ in $\O$, with the property that $F_{y_0}(0)=y_0$.
For   $\e>0$,  we consider  $u_\e: \O \to  \R$ given  by
$$
u_\e(y):=\e^{-1/2} \eta_r(F^{-1}_{y_0}(y)) w \left(\frac{F^{-1}_{y_0}(y)}{\e} \right).
$$
We can now define the test function $\Psi_\e:\O\to \R$ by
\be 
\label{eq:TestFunction-Om-3D}
\Psi_\e\left(y\right)=u_\e(y)+\e^{1/2}  \textbf{c}\,  \eta_{2r}(F^{-1}_{y_0}(y) ){M}_{y_0}(F^{-1}_{y_0}(y) ).
\ee
It is plain that $\Psi_\e\in H^1_0(\O)$ and 
$$
\Psi_\e\left(F_{y_0}(x)\right)=\e^{-1/2} \eta_r(x) w \left(\frac{x}{\e} \right)+\e^{1/2}  \textbf{c} \,  \eta_{2r}(x) {M}_{y_0}(x) \qquad\textrm{ for every $x\in \R^N$.}
$$
The main result of this section is contained in the following result.
\begin{proposition}\label{Expansion-no-h}
Let    $(\e_n)_{n\in \N}$ and $\textbf{c}$ be the sequence and the number given by Lemma \ref{lem:v-to-cR}. Then there exists  $r_0,n_0>0$ such that for every $r\in (0,r_0)$ and $n\geq n_0$
 \begin{align*}
 \begin{cases}
\displaystyle  J(\Psi_\e)=  S_{3,\s}-  \e_n \pi^2 \textbf{m}(y_0)\textbf{c}^2+\frac{\e_n^{2-\frac{\d}{2}}}{2+\d} \int_{Q_{r/\e}} w^{2+\d} dx+\calO_r(\e_n)&\qquad \textrm{ for  $\d\leq 1$}\\\\
\displaystyle  J(\Psi_\e)=  S_{3,\s}-  \e_n \pi^2 \textbf{m}(y_0)\textbf{c}^2+\frac{\e_n^{2-\frac{\d}{2}}}{2+\d} \int_{\R^3} w^{2+\d} dx+\calO_r(\e_n)&\qquad \textrm{ for  $\d> 1$},
\end{cases}
\end{align*}
for some numbers   $\calO_r(\e_n)$ satisfying    
$$
\lim_{r\to 0}\lim_{n\to \infty}  \e^{-1}_n \calO_r(\e_n)=0.
$$
\end{proposition}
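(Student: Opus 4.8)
The plan is to split the functional as $J(\Psi_\e)=J_1(\Psi_\e)+\frac{1}{2+\d}\int_\O b\,\Psi_\e^{2+\d}\,dy$ and to handle the two pieces separately. The first piece is precisely the unperturbed energy evaluated on the \emph{same} test function, so I would quote the corresponding expansion from the work of Fall and Thiam \cite{Fall-Thiam}, namely $J_1(\Psi_\e)=S_{3,\s}-\e_n\pi^2\,\textbf{m}(y_0)\,\textbf{c}^2+\calO_r(\e_n)$; the genuinely new term is the perturbation integral. For transparency I would recall the mechanism producing the mass: setting $\Phi_\e(y):=\e^{1/2}\textbf{c}\,\eta_{2r}(F^{-1}_{y_0}(y))\,M_{y_0}(F^{-1}_{y_0}(y))$ so that $\Psi_\e=u_\e+\Phi_\e$, and integrating the quadratic part by parts against the coercive operator $-\D+h$, one obtains $\tfrac12\int u_\e(-\D+h)u_\e+\int u_\e(-\D+h)\Phi_\e+\tfrac12\int \Phi_\e(-\D+h)\Phi_\e$. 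Together with the Hardy--Sobolev term and the metric expansions of Lemmas \ref{MaMetric} and \ref{MaMetricMetric}, the diagonal bubble energy, comprising its quadratic and Hardy--Sobolev self-interactions, yields $S_{3,\s}$; the interaction between $u_\e$ and $\Phi_\e$ in both the quadratic and the Hardy--Sobolev parts, evaluated through the convergence $v_\e\to\textbf{c}/|x|$ of Lemma \ref{lem:v-to-cR} and the identity $\textbf{m}(y_0)=M_{y_0}(0)$ of \eqref{eq:def-mass}, produces the mass term $-\e_n\pi^2\textbf{m}(y_0)\textbf{c}^2$; and the self-interaction $\tfrac12\int\Phi_\e(-\D+h)\Phi_\e$ is of order $\e_n$ with a coefficient that vanishes as $r\to0$, hence enters $\calO_r(\e_n)$.

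For the perturbation integral I would first show that replacing $\Psi_\e$ by $u_\e$ is harmless. Since $M_{y_0}$ is bounded by \eqref{eq:regul-beta}, one has $\Phi_\e=O(\e^{1/2})$ uniformly, whereas $u_\e$ has size $\e^{-1/2}$ near $y_0$; expanding $(u_\e+\Phi_\e)^{2+\d}$ and using the decay \eqref{eq:up-low-bound-w-3D} of $w$ together with $b\in L^\infty(\O)$, every term other than $u_\e^{2+\d}$ is seen to be $o(\e_n)$ after division by $\e_n$. It then remains to invoke Lemma \ref{Lem2} in the two three-dimensional regimes: for $\d\le1$ this gives $\int_\O b\,u_\e^{2+\d}=\e_n^{2-\d/2}b(y_0)\int_{Q_{r/\e}}w^{2+\d}+O(\e_n^{2})$, and for $\d>1$ it gives $\int_\O b\,u_\e^{2+\d}=\e_n^{2-\d/2}b(y_0)\int_{\R^3}w^{2+\d}+O(\e_n^{1+\d/2})$, which reproduce the two displayed lines of the statement. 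Adding the two pieces and collecting all remainders into $\calO_r(\e_n)$ gives the asserted double-limit property $\lim_{r\to0}\lim_{n\to\infty}\e_n^{-1}\calO_r(\e_n)=0$.

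The hard part will be the extraction of the mass term $-\e_n\pi^2\textbf{m}(y_0)\textbf{c}^2$ from $J_1(\Psi_\e)$. Because for $\s\in(0,2)$ the three-dimensional ground state $w$ is neither explicit, nor radial, nor smooth, I cannot evaluate the relevant interaction integrals directly; instead I must pass to the limit in the cross terms using only the pointwise almost-everywhere convergence $v_\e\to\textbf{c}/|x|$, which Lemma \ref{lem:v-to-cR} provides merely along the subsequence $(\e_n)$. This is exactly what forces the use of a discrete, rather than continuous, family of test functions. A secondary difficulty is the two-parameter bookkeeping: the cut-offs at scales $r$ and $2r$ create boundary contributions that are $O(\e_n)$ for fixed $r$ and tend to zero only after $r\to0$, so the $r$-dependence must be tracked throughout to ensure it never contaminates the leading terms of order $\e_n$ and $\e_n^{2-\d/2}$.
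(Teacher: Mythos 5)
Your proposal follows essentially the same route as the paper: the same splitting $J(\Psi_\e)=J_1(\Psi_\e)+\frac{1}{2+\d}\int_\O b\,\Psi_\e^{2+\d}\,dy$, quoting the Fall--Thiam expansion $J_1(\Psi_\e)=S_{3,\s}-\e_n\pi^2\textbf{m}(y_0)\textbf{c}^2+\calO_r(\e_n)$ (the paper's Lemma \ref{lem:expans-num-3D}, cited from \cite{Fall-Thiam}), and then reducing the perturbation integral to $\int_\O b\,u_\e^{2+\d}\,dy$ via the uniform bound on $M_{y_0}$ before invoking Lemma \ref{Lem2} in the two regimes $\d\le 1$ and $\d>1$ (the paper's Lemma \ref{lem:expans-denom-3D}). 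The argument is correct, and your sketch of the mass mechanism is just an unpacking of the cited Fall--Thiam proof rather than a different method.
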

The proof of this proposition will be separated  into two steps given by Lemma \ref{lem:expans-num-3D} and Lemma \ref{lem:expans-denom-3D} below.
To alleviate the notations, we will write $\e$ instead of $\e_n$ and  we will remove the subscript  $y_0$, by writing $M$ and $F$ in the place of ${M}_{y_0}$ and $F_{y_0}$ respectively.
We define 
$$
\ti \eta_r(y):=\eta_r(F^{-1}(y)),\qquad V_\e(y):=v_\e(F^{-1}(y)) \qquad\textrm{ and } \qquad \ti M_{2r}(y):=\eta_{2r}(F^{-1}(y) )  M ( F^{-1}(y))  ,
$$
where $v_\e(x)=\e^{-1} w\left(\frac{x}{\e} \right).$ With these notations, \eqref{eq:TestFunction-Om-3D} becomes
\be  \label{eq:def-W-eps-3D}
\Psi_\e (y) = u_\e(y)+ \e^{\frac{1}{2}}  \textbf{c}\, \ti M_{2r}(y)= \e^{\frac{1}{2}}   V_\e(y)+    \e^{\frac{1}{2}}  \textbf{c}\, \ti M_{2r}(y) .
\ee
We first  consider  the numerator in \eqref{Expansion-no-h}.
\begin{lemma}\label{lem:expans-num-3D}
 We have
 \begin{align*}
\displaystyle J_1(\Psi_\e) 
&= S_{3,\s}-  \e \pi^2  \textbf{c}^2 \textbf{m}(y_0)   +\calO_r(\e),
\end{align*}
as $\e \to 0$.
\end{lemma}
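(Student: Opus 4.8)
The plan is to expand the functional $J_1(\Psi_\e)$ directly from the decomposition $\Psi_\e = \e^{1/2} V_\e + \e^{1/2}\textbf{c}\,\ti M_{2r}$ given in \eqref{eq:def-W-eps-3D}. Recall that
$$
J_1(\Psi_\e)=\frac12\int_\O |\n \Psi_\e|^2\,dy+\frac12\int_\O h\,\Psi_\e^2\,dy-\frac{1}{2^*_\s}\int_\O \rho_\G^{-\s}|\Psi_\e|^{2^*_\s}\,dy.
$$
First I would work in the Fermi coordinates $F=F_{y_0}$, using the metric expansions of Lemma \ref{MaMetric} and Lemma \ref{MaMetricMetric}, so that $dy=\sqrt{|g|}\,dx$ and $|\n\Psi_\e|^2=g^{\a\b}\de_\a\Psi_\e\,\de_\b\Psi_\e$, and so that $\rho_\G(F(x))=|z|$ by \eqref{eq:rho_Gamm-is-mod-z}. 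The whole integral splits into a \emph{concentration} contribution from the region $\B_r$, where $\Psi_\e\approx \e^{1/2}V_\e=w(\cdot/\e)$ carries the $S_{3,\s}$ energy, and a \emph{mass} contribution coming from the cross term between $V_\e$ and $\textbf{c}\,\ti M_{2r}$.

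The key steps, in order, are as follows. I would first expand the Dirichlet term $\frac12\int |\n\Psi_\e|^2$. Writing out $|\n\Psi_\e|^2 = \e|\n V_\e|^2 + 2\e\,\textbf{c}\,\n V_\e\cdot\n\ti M_{2r} + \e\,\textbf{c}^2|\n\ti M_{2r}|^2$, the leading quadratic-in-$V_\e$ piece, after rescaling $x=\e\tilde x$, produces $\frac12\int_{\R^3}|\n w|^2$ plus lower-order metric corrections. The crucial cross term I would integrate by parts: since $-\D V_\e = |z|^{-\s}V_\e^{2^*_\s-1}$ (the rescaled version of \eqref{eq:w-gorund-3D}) away from the singularity and $-\D M + hM = O(\cR)$ near $0$ by the definition \eqref{C9} and \eqref{eq:regul-beta}, integrating by parts converts $\int \n V_\e\cdot\n\ti M_{2r}$ into boundary/distributional contributions concentrated near $y_0$. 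Here is where the convergence $v_\e(x)\to \textbf{c}/|x|$ from Lemma \ref{lem:v-to-cR} enters decisively: the cross term reduces to evaluating the regular part $M(0)=\textbf{m}(y_0)$ against the harmonic profile $\cR=1/|x|$, and the normalizing constant $\pi^2$ arises from the surface-integral identity for the fundamental solution of $-\D$ in $\R^3$ (the flux of $1/|x|$ through a small sphere, giving $4\pi$, combined with the factors of $\textbf{c}$). I would then handle the potential term $\frac12\int h\Psi_\e^2$, which at order $\e$ pairs $V_\e$ with $h\,\ti M_{2r}$ and must be combined with the $hM$ term from the integration by parts so that the full operator $-\D+h$ acts on $M$, yielding exactly the mass $\textbf{m}(y_0)$. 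Finally I would expand the critical term $\frac{1}{2^*_\s}\int\rho_\G^{-\s}|\Psi_\e|^{2^*_\s}$: the leading order reproduces $\frac{1}{2^*_\s}\int_{\R^3}|z|^{-\s}w^{2^*_\s}$, combining with the Dirichlet leading term to give $S_{3,\s}$, while the correction from adding $\e^{1/2}\textbf{c}\ti M_{2r}$ to $V_\e$ contributes at the same order $\e$ as the mass term via a first-order Taylor expansion of $t\mapsto|t|^{2^*_\s}$.

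Collecting these, the $\e$-order terms assemble into $-\e\pi^2\textbf{c}^2\textbf{m}(y_0)$, with all remaining pieces — the metric curvature corrections $O(|x|)$ and $O(|x|^2)$ from Lemma \ref{MaMetricMetric}, the gradient self-energy of $\ti M_{2r}$, and the tail estimates outside $\B_r$ — absorbed into the error $\calO_r(\e)$ satisfying $\lim_{r\to0}\lim_{n\to\infty}\e_n^{-1}\calO_r(\e_n)=0$. To control these errors I would repeatedly use the decay bounds \eqref{eq:up-low-bound-w-3D} for $w$ and the $C^{1,\varrho}$ bound \eqref{eq:regul-beta} for $M$. The main obstacle I anticipate is the rigorous treatment of the cross term: because in dimension three $w$ is not explicit, not radial, and only known through the profile $\theta(|t|,|z|)$ and the decay estimate, one cannot compute the integral directly but must pass through the weak/a.e. convergence $v_\e\to\textbf{c}/|x|$ of Lemma \ref{lem:v-to-cR} along the subsequence $(\e_n)$, carefully justifying the interchange of limit and integration near the singular curve. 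This is precisely why the statement is only proved along a discrete sequence $\e_n$ and why the iterated limit $\lim_{r\to0}\lim_{n\to\infty}$ appears, rather than a clean single limit as in the higher-dimensional case.
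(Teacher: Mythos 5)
First, a point of reference: the paper does not actually prove this lemma --- it cites \cite{Fall-Thiam}, Proposition 5.3 --- so your sketch must be judged against the computation carried out there, which it tries to reconstruct. Your outline assembles the right ingredients (Fermi coordinates, the metric lemmas, Lemma \ref{lem:v-to-cR}, the decay bound \eqref{eq:up-low-bound-w-3D}, the iterated limit in $r$ and $n$), but the mechanism you propose for producing the mass term is structurally wrong, because the two order-$\e$ contributions you identify cancel each other exactly. Writing $\Psi_\e=u_\e+\e^{1/2}\textbf{c}\,\ti M_{2r}$, the cross term of the quadratic form is, after integration by parts and discarding genuinely small terms,
\begin{equation*}
\e^{1/2}\textbf{c}\int_\O(-\D u_\e+h u_\e)\,\ti M_{2r}\,dy
=\e^{1/2}\textbf{c}\int_\O \rho_\G^{-\s}u_\e^{2^*_\s-1}\,\ti M_{2r}\,dy+\dots
=\e\,\textbf{c}^2\,\textbf{m}(y_0)\Bigl(\int_{\R^3}|z|^{-\s}w^{2^*_\s-1}dx\Bigr)\frac{1}{\textbf{c}}\cdot\textbf{c}+o(\e),
\end{equation*}
while the first-order Taylor correction of the critical term is
\begin{equation*}
-\frac{1}{2^*_\s}\cdot 2^*_\s\,\e^{1/2}\textbf{c}\int_\O \rho_\G^{-\s}u_\e^{2^*_\s-1}\,\ti M_{2r}\,dy
=-\e\,\textbf{c}\,\textbf{m}(y_0)\int_{\R^3}|z|^{-\s}w^{2^*_\s-1}dx+o(\e).
\end{equation*}
These are the same quantity with opposite signs: the ``evaluation of $M(0)=\textbf{m}(y_0)$ against the bubble'' that you present as the source of the mass appears twice and drops out. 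Executed as written, your expansion yields $J_1(\Psi_\e)=S_{3,\s}+0\cdot\e+\text{(errors)}$, which is not the lemma.

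After this cancellation, the mass term actually comes from precisely the terms you relegate to $\calO_r(\e)$. Along the sequence $\e_n$ one has $\e^{-1/2}\Psi_\e\to \textbf{c}\,(\eta_r\cR+\ti M_{2r})=\textbf{c}\,G(y_0,\cdot)$ away from the concentration core (this is the entire point of the definition \eqref{C9}), so the Dirichlet-plus-potential energy of the outer/matching region equals $\tfrac{\e\textbf{c}^2}{2}\int_{\de}G\,\de_\nu G\,dS$ up to errors; expanding $G$ via \eqref{eq:expans-Green} produces a term divergent in the matching radius \emph{plus} a finite multiple of $\textbf{m}(y_0)$, and the divergent part must be shown to cancel exactly against the tail deficit of the bubble energy, $\int_{\R^3\setminus \B_{r/\e}}|\n w|^2\approx 4\pi\textbf{c}^2\,\e/r$. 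Note that this tail is of size $\e/r$, and $\e^{-1}\cdot(\e/r)=1/r\not\to 0$ as $r\to 0$; so your claim that the ``tail estimates outside $\B_r$'' and the self-energy terms can all be absorbed into $\calO_r(\e)$ is false --- they are too large to be errors and survive only because they cancel. A symptom of the same problem is your account of the constant: since $\textbf{c}^2$ already appears explicitly in the statement, ``the flux $4\pi$ combined with the factors of $\textbf{c}$'' cannot turn into $\pi^2$; the numerical constant is determined by exactly the flux/tail bookkeeping described above, which your sketch never performs. (Your treatment of the potential term is off for the same reason: all explicit $h$-pairings are $\calO_r(\e)$; $h$ enters the answer only through the regular part of its Green function, not by making $-\D+h$ ``act on $M$''.)
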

For a proof, see for instance [\cite{Fall-Thiam}, Proposition 5.3].
The following result together with the previous lemma provides the proof of Proposition\ref{Expansion-no-h}.
\begin{lemma}\label{lem:expans-denom-3D}
We have
$$
\int_{\O} |\Psi_{\e}|^{2+\d} dy=\e^{2-\frac{\d}{2}} b(y_0)\int_{Q_{r/\e}} w^{2+\d} dx+ o\left(\e^{2-\frac{\d}{2}}\right) ,
$$
as $\e \to 0$.
\end{lemma}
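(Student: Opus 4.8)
The goal is to estimate $\int_\O |\Psi_\e|^{2+\d}\,dy$ where $\Psi_\e$ is the test function built in \eqref{eq:def-W-eps-3D} as a sum of a concentrating bubble $u_\e$ and a small correction $\e^{1/2}\textbf{c}\,\ti M_{2r}$. The plan is to show that the correction term and all cross terms are of lower order than the leading contribution coming from $u_\e$, so that the integral reduces to the one already handled for the pure bubble in Lemma \ref{Lem2}.

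First I would pass to the parameterization $F=F_{y_0}$ and change variables $y=F(x)$, using $\sqrt{|g|}(x)=1+O(|x|)$ from Lemma \ref{MaMetricMetric} and the continuity of $b$ with $b(F(x))=b(y_0)+O(|x|)$. Since $\Psi_\e=\e^{1/2}V_\e+\e^{1/2}\textbf{c}\,\ti M_{2r}$, I would expand $|\Psi_\e|^{2+\d}$ and argue that the dominant term is $\int_\O b\,u_\e^{2+\d}$, which by the $N=3$ cases of Lemma \ref{Lem2} equals $\e^{2-\frac{\d}{2}}b(y_0)\int_{Q_{r/\e}}w^{2+\d}\,dx+O(\e^2)$ for $\d\le 1$ (and with $\int_{\R^3}$ for $\d>1$). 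The remaining task is to control the contribution of the correction piece $\e^{1/2}\ti M_{2r}$.

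The key quantitative input is that $\ti M_{2r}$ is bounded: by \eqref{eq:regul-beta}, $\|M\|_{L^\infty(Q_{r_0/2})}\le C$, and $\eta_{2r}$ is supported in $Q_{2r}$, so $\e^{1/2}\textbf{c}\,\ti M_{2r}=O(\e^{1/2})$ pointwise on a fixed-size region. In contrast $u_\e\sim \e^{-1/2}w(\cdot/\e)$ concentrates. After rescaling $x=\e\xi$, each factor of $\ti M_{2r}$ replacing a factor of $V_\e$ costs a power $\e$ (since $\e^{1/2}\ti M_{2r}$ versus $\e^{1/2}V_\e$ differs by the absence of the $\e^{-1}w(x/\e)$ growth), so every cross term $\int u_\e^{2+\d-k}(\e^{1/2}\ti M_{2r})^k$ with $k\ge 1$ is bounded using \eqref{eq:up-low-bound-w-3D} by a quantity that, after the change of variables, carries strictly more powers of $\e$ than $\e^{2-\frac{\d}{2}}$. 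I would estimate these via Hölder's inequality together with the decay bound $w(x)\le C_2(1+|x|)^{-1}$, computing the $\e$-power from the Jacobian of $x=\e\xi$ and the homogeneity of $w(x/\e)$, and check that each is $o(\e^{2-\frac{\d}{2}})$.

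The main obstacle is the bookkeeping of the cross terms: because $2+\d$ is not an integer, I cannot literally binomially expand $|\Psi_\e|^{2+\d}$, so I would instead use the elementary inequality $\bigl||a+b|^{2+\d}-|a|^{2+\d}\bigr|\le C\bigl(|a|^{1+\d}|b|+|b|^{2+\d}\bigr)$ (valid for the range $0<\d<4$) with $a=\e^{1/2}V_\e$ and $b=\e^{1/2}\textbf{c}\,\ti M_{2r}$, reducing everything to bounding $\int_\O u_\e^{1+\d}|\ti M_{2r}|\,\e^{1/2}$ and $\int_\O \e^{(2+\d)/2}|\ti M_{2r}|^{2+\d}$. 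The second is trivially $O(\e^{(2+\d)/2})=o(\e^{2-\frac{\d}{2}})$ since $2-\frac{\d}{2}<\frac{2+\d}{2}$ for $\d>0$. For the first, after rescaling I would bound $\int u_\e^{1+\d}\le C\e^{\frac{2-N}{2}(1+\d)+N}\int_{\R^3}(1+|\xi|)^{-(1+\d)}\,d\xi$, verify convergence of the $\xi$-integral (using $1+\d>3$ fails only for small $\d$, so on the bounded region $Q_{2r}$ this is instead controlled by the cutoff and gives an admissible $\e$-power), and confirm the resulting exponent strictly exceeds $2-\frac{\d}{2}$. Collecting the leading term from Lemma \ref{Lem2} and absorbing all corrections into $o(\e^{2-\frac{\d}{2}})$ completes the proof.
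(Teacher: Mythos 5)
Your proposal follows essentially the same route as the paper's own proof: split $\Psi_\e=u_\e+\e^{1/2}\textbf{c}\,\ti M_{2r}$, reduce the leading term to the $N=3$ cases of Lemma \ref{Lem2}, and control the remainders via the elementary inequality $\bigl||a+b|^{2+\d}-|a|^{2+\d}\bigr|\le C\bigl(|a|^{1+\d}|b|+|b|^{2+\d}\bigr)$ (the paper's ``Taylor expansion'', which carries an additional harmless middle term $|a|^{\d}|b|^{2}$ estimated by H\"older), together with the boundedness of $M_{y_0}$ from \eqref{eq:regul-beta} and the decay \eqref{eq:up-low-bound-w-3D}. Like the paper, you also silently repair the statement by inserting the weight $b$ into the leading integral, which is what Lemma \ref{Lem2} actually provides.

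There is, however, an arithmetic gap exactly where you declare the remainders ``trivially'' small. You claim $O(\e^{(2+\d)/2})=o(\e^{2-\d/2})$ ``since $2-\frac{\d}{2}<\frac{2+\d}{2}$ for $\d>0$'', but $2-\frac{\d}{2}<1+\frac{\d}{2}$ holds if and only if $\d>1$. The cross term has the same problem: for $\d<2$ the rescaled integral $\int_{\B_{4r/\e}}w^{1+\d}\,d\xi$ diverges like $(r/\e)^{2-\d}$, so $\e^{1/2}\int_\O u_\e^{1+\d}|\ti M_{2r}|\,dy=O(\e^{1+\d/2})$, and again $1+\d/2>2-\d/2$ only when $\d>1$. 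Hence for $\d\le1$ your remainders are not $o(\e^{2-\d/2})$; indeed for $\d<1$ the ``main'' term is itself only of order $\e^{1+\d/2}$ (because $\int_{Q_{r/\e}}w^{2+\d}\,dx\sim(r/\e)^{1-\d}$), i.e.\ of the same order as the remainders, so the claimed expansion cannot hold in that regime. To be fair, the paper's own proof shares this defect: its remainder bounds are $o(\e)$ (and its estimate $O(\e^{3-\d/2})$ for the cross term tacitly assumes $\int_{\B_{r/\e}}w^{1+\d}\,dx=O(1)$, i.e.\ $\d>2$), so its final display also does not follow as stated for $\d\le1$. What both arguments genuinely establish, and what is all that Proposition \ref{Expansion-no-h} and Theorem \ref{th:main2} actually use, is the expansion with error $o(\e)+o(\e^{2-\d/2})$: for $\d<2$ the entire $b$-contribution is $o(\e)$ and only the mass term matters, while for $\d>1$ your estimates do deliver the stated $o(\e^{2-\d/2})$. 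If you restate the conclusion with that weaker error (or restrict the stated error to $\d>1$), your argument is complete.
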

\begin{proof}
Since $\delta>0$, by Taylor expansion we have
\begin{align}\label{eq:expan-L-2-star-1}
\int_{\O} |\Psi_{\e}|^{2+\d} dy&=\int_{\O} |u_\e+\e^{1/2}\ti {M}_{2r} |^{2+\d} dy\nonumber\\
&=\int_{\O} |u_\e|^{2+\d} dy+O\left(\e^{1/2} \int_{\O} |u_\e|^{1+\d}|\ti {M}_{2r} | dy+\int_{\O} |u_\e|^\d |\ti {M}_{2r} |^2 dy+\int_{\O} |\ti {M}_{2r} |^{2+\d} dy\right).
\end{align}
Using H\"{o}lder's inequality and \eqref{DeterminantMetric}, we have
\begin{align}\label{eq:expan-L-2-star-2}
\int_{F\left(\B_{ 4r}\right)} |\eta u_{\e}|^{\d} \left({\e}^{1/2}\ti M_r \right)^2 dy& \leq \e \|u_{\e}\|_{L^{2+\d}(F(\B_{4 r} ))}^{{\d}}\|\ti{M}_{2r}\|_{L^{2+\d}(F(\B_{4 r} ))}^{{2} }\nonumber\\
&=\e^{4-\frac{\d}{2}} \|w\|_{L^{2+\d}( \B_{ 4r};\sqrt{|g|} )}^{{\d}}  \|\ti{M}_{2r}\|_{L^{2+\d}(F(\B_{4 r} ))}^{{2} }\nonumber\\
&\leq \e^{4-\frac{\d}{2}}\|\ti{M}_{2r}\|_{L^{2+\d}(F(\B_{ 4r} ))}^{{2} }=o(\e),
\end{align}
Since $\d>0$, by \eqref{eq:regul-beta}, we easily get 
\begin{align} \label{eq:expan-L-2-star-2-00}
\int_{F\left(\B_{4 r}\right)}|{\e}^{1/2} \ti {M}_{2r} |^{2+\d} dy=O(\e^{1+\frac{\d}{2}})=o(\e).
\end{align}
By \eqref{eq:expan-L-2-star-1}, \eqref{eq:expan-L-2-star-2-00}, \eqref{eq:expan-L-2-star-2} and Lemma \ref{Lem2}, it results
\begin{align*}
\displaystyle\int_{\O} |\Psi_{\e}|^{2+\d} dy&=  \displaystyle \int_{F(\B_{r})} | u_{\e}|^{2+\d} dy
+O\left( {\e}^{1/2} \int_{F(\B_{r})} | u_{\e}|^{1+\d}   \ti M_{2r} dy \right)+o(\e)\\\\
&= \displaystyle \e^{2-\frac{\d}{2}}b(y_0) \int_{Q_{r/\e}} w^{\d+2} dx+ O\left({\e}^{1/2} \int_{F(\B_{r})} | u_{\e}|^{1+\d}   \ti M_{2r} dy\right) +o(\e).
\end{align*}
We define  $B_\e(x):=M(\e x)  \sqrt{|g_\e|}(x) =M(\e x)  \sqrt{|g|}(\e x)$. Then
by    the change of variable $y=\frac{F(x)}{\e}$ in the above identity and recalling \eqref{DeterminantMetric}, then by oddness, we have 
\begin{align*}
{\e}^{1/2}\int_{\O} |u_\e|^{1+\d}|\ti {M}_{2r} | dy&=O\left({\e}^{3-\d/2}\int_{\B_{r/\e}} | w |^{1+\d} dx\right)=O\left({\e}^{3-\d/2}\right).
\end{align*}
Therefore
$$
\int_{\O} |\Psi_{\e}|^{2+\d} dy=\e^{2-\frac{\d}{2}} b(y_0)\int_{Q_{r/\e}} w^{2+\d} dx+ o\left(\e^{2-\frac{\d}{2}}\right),
$$
as $\e \to 0$. This then ends the proof.
\end{proof}
%
%
%
%
%
\section{Proofs of Theorem \ref{th:main1} and Theorem \ref{th:main2}}\label{Section5}
It's well known in the literature that if
\begin{equation}\label{Compacity}
\mu_\s(\O, \G, h, b)< S_{N,\s},
\end{equation}
then $\mu_\s(\O, \G, h, b)$ is achieved by a positive function $u \in H^1_0(\O)$. For a similar result, we refer to the works of \cite{Fall-Thiam, ThiamH, Jaber4} and references therein. Therefore, the proofs of Theorem \ref{th:main1} and Theorem \ref{th:main2} are direct consequences of Proposition \ref{Prop1}, Proposition \ref{Expansion-no-h} and inequality \eqref{Compacity} above.
%
%
%
%
%
%

\end{document}